\title{Stationary map coloring
  \footnotetext{\noindent \textit{2000 Mathematics Subject Classification:}
    60C05.}
  \footnotetext{\noindent \textit{Keywords}: Poisson process,
    graph coloring, planar graphs, Voronoi tessellation, Delaunay
    triangulation, percolation.}
}
\author{Omer Angel\thanks{University of British Columbia; research
    supported by NSERC.}
  \and Itai Benjamini\thanks{Weizmann Institute of Science.}
  \and Ori Gurel-Gurevich\thanks{Microsoft Research.}
  \and Tom Meyerovitch\thanks{Tel Aviv University.}
  \and Ron Peled\thanks{New York University. Partially completed during
    stay at the Institut Henri Poincare - Centre Emile Borel. Research
    supported by NSF Grant OISE 0730136.}}
\date{May 2009}
\newtheorem{theorem}{Theorem}[section]
\newtheorem{lem}[theorem]{Lemma}
\newtheorem{lemma}[theorem]{Lemma}
\newtheorem{prop}[theorem]{Proposition}
\newtheorem{cor}[theorem]{Corollary}
\newtheorem{defn}[theorem]{Definition}
\newcommand{\lemref}[1]{Lemma~\ref{L:#1}}
\newcommand{\thmref}[1]{Theorem~\ref{thm:#1}}
\newcommand{\cA}{\mathcal{A}}
\newcommand{\cB}{\mathcal{B}}
\newcommand{\cP}{\mathcal{P}}
\renewcommand{\P}{\mathbb{P}}
\newcommand{\E}{\mathbb{E}}
\newcommand{\N}{\mathbb{N}}
\newcommand{\R}{\mathbb{R}}
\newcommand{\M}{\mathbb{M}}
\newcommand{\Z}{\mathbb{Z}}
\newcommand{\eps}{\varepsilon}
\DeclareMathOperator{\Poi}{Poi}
\DeclareMathOperator{\mex}{mex}
\begin{document}

\maketitle

\begin{abstract}
  We consider a planar Poisson process and its associated Voronoi map. We
  show that there is a proper coloring with $6$ colors of the map which is a
  deterministic isometry-equivariant function of the Poisson process.
  As part of the proof we show that the $6$-core of the corresponding Delaunay
  triangulation is empty.

  Generalizations, extensions and some open questions are discussed.
\end{abstract}

\section{Introduction}

The Poisson-Voronoi map is a natural random planar map. Being planar, a
specific instance can always be colored with 4 colors with adjacent cells
having distinct colors. The question we consider here is whether such a
coloring can be realized in a way that would be isometry-equivariant, that
is, that if we apply an isometry to the underlying Poisson process, the
colored Poisson-Voronoi map is affected in the same way. In other words,
can a Poisson process be equivariantly extended to a colored
Poisson-Voronoi map process? How many colors are needed? Can such an
extension be deterministic?

Extension of spatial processes, particularly of the Poisson process, have
enjoyed a surge of interest in recent years. The general problem is to
construct in the probability space of the given process, a richer process
that (generally) contains the original process. Notable examples include
allocating equal areas to the points of the Poisson process \cite{HP05,
  HHP06,HHP09,Kri,grav1,grav2}; matching points in pairs or other groups
\cite{HP03, DM06, HPPS09, AGH09}; thinning and splitting of a Poisson
process \cite{HLS09, AHS09}. Coloring extensions of i.i.d.\ processes on
$\Z^d$ are considered in \cite{BHSW09}.

We now proceed with formal definitions and statement of the main results.
A non-empty, locally finite subset $S \subset \R^d$ defines a partition of
$\R^d$, called the \emph{Voronoi tessellation}, as follows: The Voronoi
cell $C(x)$ of a point $x\in S$ contains the points of $\R^d$ whose
distance to $S$ is realized at $x$:
\[
C(x) = \{z\in \R^d : d(z,x) = d(z,S)\}.
\]
Points in the intersection $C(x)\cap C(y)$ have equal distance to $x$ and
$y$. It follows that the cells cover $\R^d$ and have disjoint interiors.

For the purposes of coloring, we consider the adjacency graph $G$ of these
cells, with vertices $S$ and edge $(x,y)$ if $C(x)\cap C(y)\neq\emptyset$.
In the case $d=2$, this graph is called the {\em Delaunay triangulation},
and is a triangulation of the plane. (In general, this graph is the
$1$-skeleton of a simplicial cover of $\R^d$.) A $k$-coloring of the
Voronoi tessellation is a proper $k$-coloring of the Delaunay
triangulation, i.e.\ an assignment of one of $k$ colors to each cell so
that adjacent cells have distinct colors. Note that if $S$ does not contain
four or more co-cyclic points, then no more than three cells meet at a
single point. This is a.s.\ the case for the Poisson process. However, for
greater generality one needs the more careful definition, where $(x,y)$ is
an edge if $|C(x)\cap C(y)|>1$. This ensures that the graph is planar.

\begin{figure}[t]
  \centering
  \includegraphics[width=.8\textwidth]{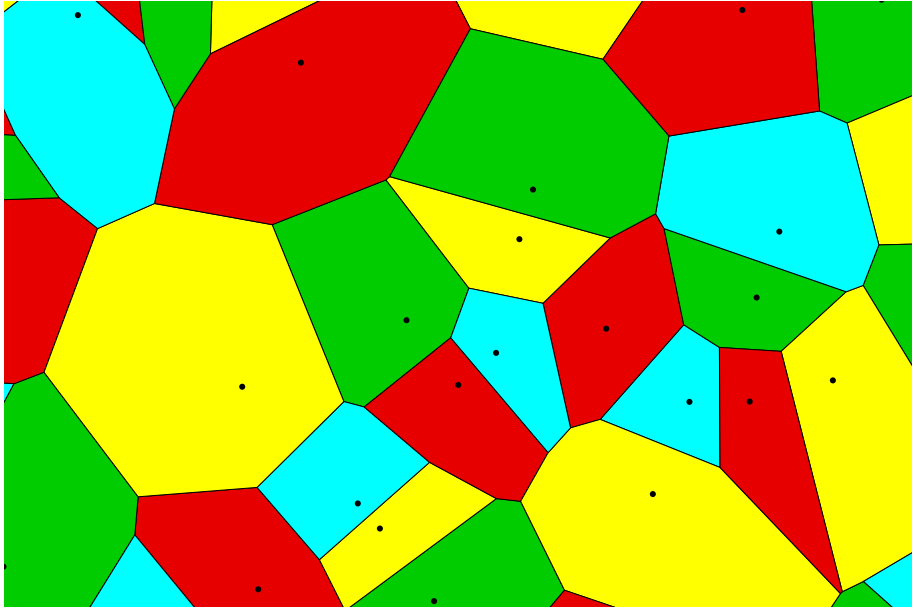}
  \caption{A proper $4$-coloring of a portion of the Poisson-Voronoi map.}
  \label{fig:4-color}
\end{figure}

Given a standard (unit intensity) Poisson process $\cP\subset \R^2$, the
Poisson-Voronoi map is the Voronoi map of its support. By the 4 color
theorem, the Poisson-Voronoi map can always be properly colored with 4
colors. Our main question is whether it is possible to color the
Poisson-Voronoi map in an isometry equivariant way and if so, how many
colors are needed.

To make this precise, let $\M$ be the space of locally finite sets in
$\R^d$, endowed with the local topology and Borel
$\sigma$-algebra.\footnote{It is also common to let $\M$ be the set of
  non-negative integer valued measures on $\R^d$ with
  $\mu(\{x\})\in\{0,1\}$. The distinction will not be important to us.} Let
$\P$ be the probability on $\M$ which is the law of the Poisson process.
Each realization $\cP \in \M$ has the Delaunay graph associated with it. A
(proper) $k$-coloring of $\cP$ is a disjoint partition $\cP = \cup_{i=1}^k
\cP_i$ such that if $x\sim y$ in the Delaunay graph of $\cP$, then $x,y$
are not in the same $\cP_i$. Thus the space of $k$-colored maps is a subset
of $\M^k$.

A {\em deterministic} $k$-coloring scheme of the Voronoi map is a measurable
function $F:\M \to \M^k$ such that $F(\cP)$ is $\P$-a.s.\ a $k$-coloring of
$\cP$. Informally, given the point process, $F$ assigns a color to each
point so that the result is a proper coloring.

A {\em randomized} $k$-coloring scheme of the Voronoi map is a probability
measure $\mu$ on $\M^k$, supported on proper $k$-colorings, such that the law
under $\mu$ of $\cP := \cup_{i=1}^k \cP_i$ is $\P$. Given such a measure
$\mu$, one may consider $\mu$ conditioned on $\cP$. This conditional
distribution is defined $\P$-a.s., and is supported on $k$-colorings of
$\cP$. Thus a randomized $k$-coloring can be interpreted as assigning
to each $\cP\in\M$ a probability measure on $k$ colorings of $\cP$.
Note that any deterministic coloring scheme is also a randomized one, with
$\mu$ being the push-forward of $\P$ by $F$.

A deterministic coloring scheme is said to be {\em isometry equivariant} if every isometry $\gamma$ of $\R^d$, acting naturally on $\M$ and $\M^k$,
has $\gamma F(\cP) = F(\gamma \cP)$. For randomized schemes equivariance is
defined by $\mu \circ \gamma = \mu$. These definitions coincide for
deterministic schemes.

\begin{theorem}\label{thm:main}
  There exists a deterministic isometry equivariant 6-coloring scheme
  of the Poisson-Voronoi diagram in $\R^2$.
\end{theorem}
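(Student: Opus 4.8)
The plan is to reduce the theorem to a single combinatorial fact about the Delaunay graph $G=G(\cP)$ of $\cP$ and then to manufacture the coloring by a ``peeling'' argument.

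\emph{The key lemma.} The heart of the matter is to show that the $6$-core of $G$ is $\P$-a.s.\ empty: $G$ contains no subgraph with minimum degree $\ge 6$, equivalently the process that repeatedly deletes every vertex of current degree $\le5$ (iterated transfinitely) exhausts $\cP$. For \emph{finite} planar graphs this is classical: a simple planar graph on $m\ge3$ vertices has at most $3m-6$ edges, hence a vertex of degree $\le5$, and deleting it leaves a smaller planar graph; so $G$ has no finite subgraph of minimum degree $\ge6$. What must really be excluded is an \emph{infinite} ``triangular-lattice-like'' patch, which Euler's count does not forbid (the triangular lattice is planar and $6$-regular). Here I would use the Poisson geometry. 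Suppose the $6$-core $K$ is non-empty with positive probability; since $K$ is a translation-equivariant subset of $\cP$ and $\cP$ is ergodic, $K$ then has positive intensity almost surely. Now a positive-intensity subgraph $K$ of a planar graph with minimum degree $\ge6$ is forced to be essentially $6$-\emph{regular}: restricting to a large box $B_n$ and using that edges crossing $\partial B_n$ contribute $o(|K\cap B_n|)$, planarity gives mean degree in $K$ at most $6$, while minimum degree $\ge6$ gives mean degree at least $6$; so the mean is exactly $6$, whence up to a density-zero set every vertex of $K$ has exactly $6$ neighbours in $K$, and a parallel count on faces forces $K$ to be, up to density zero, a triangulation. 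In other words $K$ would be combinatorially the triangular lattice, realised as a sub-configuration of $\cP$ — an infinite rigid ``flat'' pattern. \textbf{The main obstacle is to rule this out:} one must show that almost surely the Poisson process contains no such infinite flat sub-configuration, a genuinely probabilistic statement (a pattern-frequency / percolation-type argument showing that the ``locally flat'' sites cannot sustain an infinite cluster), and this is where essentially all the work lies.

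\emph{From empty $6$-core to an equivariant coloring.} Granting the lemma, run the peeling and let $\rho(x)$ be the (ordinal) stage at which $x\in\cP$ is deleted; $\rho$ is a measurable isometry-equivariant function of $\cP$. When $x$ is deleted it has at most $5$ surviving neighbours, so $x$ has at most $5$ Delaunay-neighbours $y$ with $\rho(y)\ge\rho(x)$; orienting each edge towards the endpoint of larger rank — with ties inside a rank-class broken by a fixed measurable equivariant ``local order'' on $\cP$ (available because $\cP$ is a.s.\ in general position, e.g.\ by comparing the isometry-invariant lists of successive nearest-neighbour distances) — exhibits $G$ as $5$-degenerate, and a greedy ``first unused colour among $\{1,\dots,6\}$'' rule along this structure yields a proper $6$-coloring. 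The delicate point is that the peeling may be genuinely transfinite, so the greedy recursion must be organised over a well-founded refinement of the rank hierarchy rather than naively ``colouring the outermost layer first''; the a.s.\ triviality of $\mathrm{Isom}(\cP)$ is exactly what lets all the resulting non-canonical choices be made equivariantly — note that a periodic configuration such as the triangular lattice admits no equivariant proper coloring at all, so some genericity of $\cP$ is unavoidable. I expect this step to be routine relative to the key lemma.

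\emph{Remark.} Euler's bound yields only $6$ colours via degeneracy; getting down to $4$ equivariantly would seem to require a canonical form of the Four Colour Theorem and is presumably left as an open problem.
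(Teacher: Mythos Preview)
Your architecture matches the paper's: peel vertices of degree $\le 5$, record the deletion rank, orient edges toward higher rank with an equivariant tie-break, and color greedily by $\mex$. The gap is that your key lemma is too weak for this construction. An empty $6$-core only gives every vertex a \emph{finite} rank (your worry about transfinite peeling is unfounded: local finiteness of the Delaunay graph forces $\bigcap_n G_n$ to already have minimum degree $\ge 6$, so it equals the $6$-core and the process stabilizes at stage $\omega$). But the greedy recursion requires that there be no infinite directed path $u_0\to u_1\to\cdots$; along such a path ranks are non-decreasing, so either they tend to infinity --- which an empty $6$-core does not forbid --- or they stabilize and your tie-break has an infinite decreasing chain along Delaunay edges --- which you never verify for ``successive nearest-neighbour distances'' or any other candidate. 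The paper accordingly proves two propositions, not one: Proposition~\ref{removal_prop} (for some fixed $M$, $G_M$ has only finite components --- strictly stronger than empty $6$-core, and exactly what blocks unbounded-rank paths) and Proposition~\ref{area_prop} (no infinite Delaunay path with monotone cell areas, so area serves as the tie-break).

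Your ergodic sketch (positive intensity, Euler forces mean degree $6$, hence the $6$-core is ``essentially triangular,'' now rule that out) is a different attack from the paper's, but even if completed it delivers only the weaker statement. The paper instead localizes: peeling restricted to a box $Q(0,3R)$ clears the inner box $Q(0,R)$ with probability $\to 1$ (Lemma~\ref{L:finite_radius_6core}, proved via Euler-type edge counts and a ``rare squares'' estimate), and sealed boxes plus $k$-dependent percolation upgrade this to Proposition~\ref{removal_prop}. The localization is not cosmetic; it is what converts ``eventually every vertex is peeled'' into ``after $M$ steps only finite components remain,'' and the latter is what the coloring construction actually consumes.
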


The requirement of determinism complicates things significantly. In
contrast, we have the much simpler result

\begin{prop}\label{prop:random}
  There exists a randomized isometry equivariant 4-coloring scheme of the
  Poisson-Voronoi diagram in $\R^2$.
\end{prop}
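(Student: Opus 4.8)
The plan is to realize the desired law $\mu$ as a weak subsequential limit of finite-volume constructions, the point being to arrange both invariance under the orthogonal group and invariance under translations in the limit. For $N\ge 1$ let $D_N\subset\R^2$ be the closed disk of radius $N$ about the origin, and fix auxiliary radii $M_N$ with $M_N\to\infty$ but $M_N/N\to 0$ (say $M_N=\sqrt N$). Sample $\cP\sim\P$, an independent point $X$ uniform on $D_{M_N}$, and independent extra randomness; let $H_N$ be the subgraph of the Delaunay graph of $\cP$ induced on the finite vertex set $\cP\cap(D_N+X)$. Since $H_N$ is a finite planar graph, the four color theorem gives it at least one proper $4$-coloring; choose one uniformly at random among these, and extend it to a coloring of all of $\cP$ by assigning color $1$ to the points outside $D_N+X$. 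Let $\nu^N$ be the resulting law on $\M^4$, a coloring being recorded as the tuple of its color classes.

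Three features of $\nu^N$ drive the proof. (i) Its marginal on $\M$ (the law of $\bigcup_i\cP_i$) is $\P$ for every $N$. (ii) $\nu^N$ is invariant under the orthogonal group $O(2)$ of rotations and reflections about the origin, because such a map fixes each $D_N$, preserves the law of $\cP$, and preserves the uniform law of $X$ on $D_{M_N}$. (iii) For a fixed vector $v$, the measures $\nu^N$ and $\nu^N\circ\tau_v$ (with $\tau_v$ translation by $v$) arise from the same measurable function of $(\cP,X,\text{extra randomness})$ upon replacing the law of $X$ by its $v$-translate, so their total variation distance is at most that between the uniform laws on $D_{M_N}$ and on $D_{M_N}+v$, which is at most $c|v|/M_N\to 0$. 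Since the marginal on $\M$ is the fixed tight measure $\P$ and the colorings lie in compact fibers, $\{\nu^N\}$ is tight; let $\mu$ be any subsequential weak limit. By (i) $\mu$ has marginal $\P$; by (ii) $\mu$ is $O(2)$-invariant; and by (iii) $\nu^N$ and $\nu^N\circ\tau_v$ have the same weak limit, so $\mu$ is translation invariant. Hence $\mu$ is invariant under all of $\mathrm{Isom}(\R^2)$.

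Finally one checks $\mu$ is supported on proper $4$-colorings. For a configuration in general position --- a.s.\ under $\P$, hence under $\mu$ --- every Delaunay edge bounds a Delaunay triangle and is therefore witnessed by a finite empty disk, so for a fixed bounded window $W$ the event ``some Delaunay edge with both endpoints and a witnessing empty disk inside $W$ is monochromatic'' is an open condition on colored (generic) configurations. Once $N$ is large enough that $D_N+X\supseteq D_{N-M_N}\supseteq W$ for every allowed $X$, $\nu^N$ colors all such edges properly, so this open bad event has $\nu^N$-probability $0$; by the Portmanteau theorem it has $\mu$-probability $0$, and letting $W\uparrow\R^2$ shows $\mu$ is a.s.\ proper. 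That the color classes are disjoint and exhaust $\cP$ also passes to the weak limit, using that $\P$-typical configurations have no repeated points. Thus $\mu$ is a randomized isometry-equivariant $4$-coloring scheme.

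The only real obstacle is producing exact isometric invariance out of an unavoidably finite construction. The intermediate scale $M_N$ is precisely the device for translations: large enough ($M_N\to\infty$) that translating by any fixed vector disappears in the limit, yet small enough ($M_N/N\to0$) that every fixed window is eventually contained in $D_N+X$ with certainty, which is what preserves properness under the limit; disks rather than boxes dispose of $O(2)$ at no cost. (Alternatively one may invoke Day's fixed point theorem for the amenable group $\mathrm{Isom}(\R^2)$ acting affinely on the compact convex set of candidate laws, with a starting point furnished by the four color theorem together with the de Bruijn--Erd\H{o}s compactness theorem; the construction above is a concrete instance of this.)
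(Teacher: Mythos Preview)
Your proof is correct and follows the same overall strategy as the paper: average a $4$-coloring over larger and larger pieces of the isometry group and extract a weak subsequential limit. The implementations differ in one point worth noting. The paper first fixes a measurable $4$-coloring $F$ of the \emph{entire} infinite configuration (existence via the four color theorem together with de~Bruijn--Erd\H{o}s; measurability by taking, say, the lexicographically minimal coloring), and then conjugates $F$ by a single uniformly random isometry $\sigma=\tau_x\circ\rho_\theta\circ\eps^u$ with $x$ uniform in a large ball; since every pre-limit law is already supported on proper colorings, so is any weak limit, and no Portmanteau/openness argument is needed. Your version colors only a random finite window, so the pre-limit colorings are improper near $\partial(D_N+X)$ and you must check separately that properness survives the limit; your two-scale device ($M_N\to\infty$, $M_N/N\to0$) is exactly what makes this work while also delivering translation invariance, whereas the paper obtains full isometry invariance in one stroke by sampling the random isometry directly. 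Both are concrete instances of the amenability averaging you allude to at the end; the paper's route trades the openness check for the (mild) burden of producing a measurable $4$-coloring of the infinite map.
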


In dimensions other than 2 the problem is not as interesting.

\begin{prop}\label{prop:other_dim}
  In $\R$, there is a randomized isometry equivariant coloring of the
  Poisson-Voronoi map with 2 colors and a deterministic one with 3 colors.
  In both cases this is the best possible.

  In $\R^d$ for $d>2$, the chromatic number of the Poisson-Voronoi map is
  a.s.\ $\infty$.
\end{prop}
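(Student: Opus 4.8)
The plan is to treat the three assertions separately, starting with the one‑dimensional case.

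\emph{One dimension.} For a locally finite $S\subset\R$ the Voronoi cells are intervals and the associated Delaunay graph is the bi‑infinite path joining consecutive points; for the Poisson process it is a.s.\ a two‑ended path $\{p_n\}_{n\in\Z}$ with i.i.d.\ exponential gaps $g_n=p_{n+1}-p_n$. Since a graph with edges has no proper $1$‑coloring, both ``best possible'' claims reduce to exhibiting a randomized equivariant $2$‑coloring and a deterministic equivariant $3$‑coloring and ruling out a deterministic equivariant $2$‑coloring. The randomized scheme is immediate: the path has exactly two proper $2$‑colorings, interchanged by swapping the two colors, so letting $\mu$ be, given $\cP$, the uniform measure on this pair gives a randomized scheme with marginal $\P$ that is isometry equivariant, as every isometry of $\R$ carries the unordered pair of $2$‑colorings of one configuration's path to that of its image.

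\emph{The deterministic one‑dimensional statements.} For the $3$‑coloring I would first select, as a measurable equivariant function of $\cP$, an independent set $I\subseteq\cP$ in the path, say $p_n\in I$ when $\max(g_{n-1},g_n)<\min(g_{n-2},g_{n+1})$. This defining event is symmetric under reversing the order of the points (so the construction is reflection equivariant), forces no two chosen points to be neighbors, and, being a positive‑probability event on four consecutive gaps, a.s.\ occurs infinitely often in both directions with a.s.\ finite gaps. Color every point of $I$ with color $3$, and properly $2$‑color, with colors $1,2$, each finite path lying strictly between consecutive points of $I$, resolving the binary ambiguity by requiring the colors to increase away from the end adjacent to the shorter of that path's two flanking $\cP$‑gaps (an intrinsic, hence equivariant, choice). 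Since $I$ is independent, the recolored vertices are mutually non‑adjacent and each neighbors only colors $1,2$, so the result is a proper equivariant $3$‑coloring. To rule out a deterministic equivariant $2$‑coloring $F$, pass to the Palm version of $\cP$, so $p_0=0$ and the gaps are i.i.d., and let $\theta$ be the shift $\cP\mapsto\cP-p_1$, which is Palm‑preserving and ergodic (a Bernoulli shift). Put $\phi(\cP)=F(\cP)(p_1)$. Equivariance of $F$ under translation by $-p_1$ gives $\phi(\theta\cP)=F(\cP)(p_2)$, which differs from $\phi(\cP)=F(\cP)(p_1)$ by properness; hence $\phi$ is $\theta^2$‑invariant, so a.s.\ constant, a contradiction. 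Thus $3$ colors are optimal for deterministic schemes in $\R$, and $2$ for randomized ones.

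\emph{Dimension $d\ge 3$.} Here I would show that the Delaunay graph of $\cP$ a.s.\ contains cliques of every size, forcing $\chi=\infty$. Lift $\R^d$ to the paraboloid in $\R^{d+1}$ by $x\mapsto x^+=(x,|x|^2)$: then $x,y$ are a Delaunay edge of a point set $P$ iff $x^+,y^+$ span an edge of the convex hull $\mathrm{conv}(P^+)$ (a non‑vertical supporting hyperplane at such an edge is exactly the boundary sphere of an empty ball through $x,y$). Take $x_i=(t_i,t_i^2,\dots,t_i^d)$ for $n$ distinct values $t_i$ in, say, $(1,2)$. Expanding the homogenized $(d+2)\times(d+2)$ determinants of the lifted points $x_i^+$ along the last coordinate, each equals $\prod_{i<j}(t_j-t_i)$ times a sum of complete homogeneous symmetric polynomials in the $t_i$, which is strictly positive because the $t_i$ are positive; so these determinants all share the sign of $\prod_{i<j}(t_j-t_i)$, which is precisely the condition that the $x_i^+$ form a cyclic polytope. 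A cyclic polytope in $\R^{d+1}$ is $\lfloor(d+1)/2\rfloor$‑neighborly and $\lfloor(d+1)/2\rfloor\ge 2$ for $d\ge3$, so every pair $x_i,x_j$ is a Delaunay edge of $\{x_1,\dots,x_n\}$; the strict inequalities make this stable under small perturbations, with all witnessing balls confined to a bounded region. Hence with positive probability $\cP$ has exactly $n$ points, each within $\eps$ of some $x_i$, and is empty on a fixed bounded set containing those balls, and then these $n$ points form a $K_n$ in the Delaunay graph of $\cP$. Since ``the Delaunay graph contains a $K_n$'' is a translation‑invariant, positive‑probability event, it a.s.\ occurs; intersecting over $n$ gives $\chi=\infty$ a.s. (In $\R^2$ this route is blocked: the Delaunay graph is planar, hence contains no $K_5$.)

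The one‑dimensional arguments are routine; the step I expect to demand the most care is the clique construction for $d\ge 3$ — pinning down a configuration that provably realizes $K_n$ (the cyclic‑polytope computation, and the lifting lemma that turns Delaunay edges into hull edges) and then checking that the emptiness required can be imposed by a single positive‑probability Poisson event, i.e.\ that all the witnessing balls lie in one bounded set and avoid the prescribed positions of the $n$ points themselves.
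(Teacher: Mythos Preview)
Your one-dimensional arguments are correct. The $3$-coloring is the same idea as the paper's (the paper takes as its independent set the cells shorter than both neighbors, then $2$-colors each intervening stretch starting from the shorter of its two bounding green cells). Your impossibility proof for a deterministic $2$-coloring is a genuinely different route: the paper approximates $F$ by a scheme $\cB$ depending only on a bounded window $[-L,L]$ and extracts a parity contradiction with the number of Poisson points in a disjoint interval, whereas you use ergodicity of the square of the Palm shift on the i.i.d.\ gap sequence. Both work; yours is shorter, while the paper's argument makes explicit that only a weak decorrelation property of the point process is being used.

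For $d\ge 3$ your overall strategy matches the paper's (moment-curve configuration, perturbation stability, positive Poisson probability, translation invariance), and the paper simply cites Dewdney--Vranch and Preparata for the key fact that $n$ points on the moment curve have pairwise adjacent Voronoi cells. Your attempt to \emph{prove} that fact has a real gap. The lifting correspondence says $x,y$ are Delaunay-adjacent iff $x^+,y^+$ form an edge of the \emph{lower} hull of $P^+$, not of the full convex hull: a non-vertical supporting hyperplane at an edge of $\mathrm{conv}(P^+)$ gives a sphere through $x,y$ with all other points on one side, but that side may be the inside (an upper supporting hyperplane), and then the ball is not empty. Your determinant computation does show that $\mathrm{conv}(P^+)$ is combinatorially a cyclic polytope, hence $2$-neighborly, but $2$-neighborliness of the full hull does not yield $2$-neighborliness of the lower hull --- already for four non-cocircular points of $\R^2$ in convex position the lifted tetrahedron has six edges, only five of which are lower. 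To close the gap, argue directly: the in-sphere condition at $x(t)=(t,\dots,t^d)$ reads $P(t)\le 0$ for a monic degree-$2d$ polynomial whose coefficients of $t^0,\dots,t^d$ are free (they encode the center and radius); for $d\ge 3$ one can choose $P(t)=(t-t_i)^2(t-t_j)^2R(t)$ with $R>0$, giving an empty closed ball through $x(t_i),x(t_j)$ and hence Delaunay adjacency of every pair.
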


The rest of the paper is organized as follows: In section~\ref{sec:outline} we
outline the proof of \thmref{main}, and present our deterministic coloring algorithm and
the two main propositions needed to prove its correctness. In
Section~\ref{sec:gen} we discuss related questions: randomized colorings,
dimensions other than $2$, and mention some open problems.
Section~\ref{sec:proof} contains the proof of our main theorem.

\section{Proof outline} \label{sec:outline}

We outline the proof of Theorem~\ref{thm:main}. The idea is to find an
isometry equivariant adaptation to the Voronoi map of a 6 coloring
algorithm for finite planar graphs, originating in Kempe's attempted proof
of the four color theorem. By Euler's formula it is known that any finite
planar graph $G$ has a vertex of degree at most 5. The algorithm proceeds
by iteratively removing such a vertex until the graph is empty, then
putting back the vertices one by one in reverse order. As each vertex
is put back into the graph, it is assigned a color distinct from those
already assigned to any of its neighbors. Since a vertex has at most 5
neighbors when it is put back, this produces a proper 6 coloring.

To adapt this algorithm to the Poisson-Voronoi isometry equivariant
setting, one must deal with several issues. First, there exist infinitely
many vertices of degree at most 5 and there is no way to pick just one of
them in an isometry-equivariant way. Second, even if we iteratively remove
{\em all} vertices of degree at most 5, the graph will not become empty
after any finite number of steps. Finally, when returning the vertices, it
is not clear in what order to do so (which may be important if some of them
are neighbors). We need a way to order them which is isometry-equivariant.

We overcome these issues by proving that for a Poisson-Voronoi map, the
following two properties hold almost surely. Let $G=(V,E)$ be the Delaunay
graph formed by the Poisson-Voronoi map. For a cell $v\in V$ write $A(v)$
for its area as a planar region. Inductively, define $G_0=G$ and $G_{i+1}$
as the graph formed from $G_i$ by removing all vertices of degree (in
$G_i$) at most $5$.

\begin{prop}\label{removal_prop}
  There exists an integer $M>0$ such that, almost surely, $G_M$ contains
  only finite connected components.
\end{prop}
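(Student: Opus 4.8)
The plan is to establish Proposition~\ref{removal_prop} by combining two ingredients: (i) a deterministic graph-theoretic statement that, after one ``peeling'' step $G_i \to G_{i+1}$, the surviving vertices must all have had large degree in $G_i$, concentrated in a quantitative way via Euler's formula for planar graphs; and (ii) a probabilistic decay estimate showing that the density of vertices surviving $M$ peeling steps can be made smaller than the critical density for percolation on the Delaunay graph of a Poisson process. Since the Delaunay triangulation of a Poisson process is a.s.\ locally finite and the $6$-core (equivalently, the fixed point of the peeling map) is claimed empty elsewhere in the paper, the real content here is getting a \emph{finite} $M$ that works, not merely $M = \infty$.

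First I would set up the combinatorial heart of the argument. By Euler's formula, any finite planar graph has average degree strictly less than $6$; more usefully, if $H$ is planar and \emph{every} vertex of $H$ has degree at least $6$, then $H$ is infinite, and in fact a positive proportion of vertices of any finite induced region must have degree at most $5$ in $H$ after accounting for boundary effects. The peeling map removes all vertices of degree $\le 5$, so a vertex $v$ survives from $G_i$ to $G_{i+1}$ only if $\ge 6$ of its $G_i$-neighbors also survive. Iterating, a vertex surviving to $G_M$ is the root of a depth-$M$ tree-like structure in which branching is at least $6$ at every level; this forces either the component of $v$ in $G_M$ to be large, or — and this is the key dichotomy I would exploit — many ``witnesses'' at distance $\le M$ with pairwise-constrained geometry. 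The cleanest route is to show that the event ``$v \in G_M$ and the $G_M$-component of $v$ is infinite'' propagates a $6$-ary branching structure of unbounded depth inside the original Delaunay graph, whose existence I will rule out probabilistically.

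The probabilistic step is where I would do the real work. Let $p_i = \P(0 \in G_i \mid 0 \in \cP)$ in the Palm sense (the probability that the Voronoi cell of a point at the origin survives $i$ peeling steps), which is well-defined and translation-invariant. The goal is to show $p_M$ is so small that an FKG/van-den-Berg–Kesten-type argument, or a direct first-moment bound on depth-$M$ branching structures, forces $G_M$ to have no infinite component. Concretely: the expected number of depth-$L$ self-avoiding $6$-branching structures rooted at the origin inside $G_M$ is bounded by $(\text{combinatorial factor}) \cdot (\text{correlation-decay factor})^L$, and if $p_M$ is below the relevant threshold this tends to $0$, so no infinite component survives. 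To make $p_M$ small I would use the recursion $p_{i+1} \le \P(0 \in \cP \text{ has} \ge 6 \text{ surviving neighbors})$, bound this using the fact that conditionally on the Delaunay neighborhood the survival events have weak positive correlations (or crude union/Markov bounds suffice since we only need \emph{some} finite $M$), and show $p_i \to 0$; then pick $M$ with $p_M$ below the percolation threshold.

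The main obstacle I anticipate is the dependence structure: whether a given cell survives peeling step $i$ depends on an a priori unbounded neighborhood of the Poisson process, so the survival events are not independent and not even finite-range, which obstructs a naive Borel–Cantelli or branching-process comparison. I would handle this either by a multi-scale/renormalization argument — showing survival to step $i$ implies an ``atypically dense'' configuration in a bounded box, whose probability is summably small — or by invoking the companion result that the $6$-core is empty (so $p_i \to 0$) together with a quantitative rate, and then truncating the branching structure at a large but finite depth where the first-moment bound already kills infinite components. A secondary technical point is the Palm-calculus bookkeeping needed to talk about ``the density of surviving cells'' rigorously, and ensuring the chosen $M$ is deterministic and independent of the realization, which follows once $p_M$ is shown to be below a universal (realization-independent) threshold such as the site-percolation threshold of the Poisson–Delaunay graph.
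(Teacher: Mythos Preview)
Your proposal has a genuine gap at its core. The claim that a vertex surviving to $G_M$ ``is the root of a depth-$M$ tree-like structure in which branching is at least $6$ at every level'' is false as a mechanism for producing decay: in a patch of the triangular lattice every vertex has degree exactly $6$, so every vertex survives all peeling steps, yet there is no exponential growth whatsoever because the putative branches overlap completely. This is not a pathology one can wave away; such approximately-triangular patches are exactly the obstruction to the proposition, and any correct proof must rule them out directly. Consequently the recursion you sketch for $p_i$ does not yield $p_i\to 0$: conditionally on a vertex having $\ge 6$ surviving neighbours, those neighbours are \emph{more} likely to survive (the survival events are positively correlated and cluster), so the naive bound stalls. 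For the same reason, even granting $p_M$ small, comparing to a site-percolation threshold on the Delaunay graph is not legitimate, since $G_M$ is not an i.i.d.\ thinning of $\cP$ and the clustering of survivors can produce an infinite component at arbitrarily low density. Finally, invoking ``the companion result that the $6$-core is empty'' is circular here: that statement is obtained in the paper precisely \emph{through} the machinery that proves this proposition, not independently.

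The paper's proof supplies exactly the ideas your proposal gestures at but does not provide. It localizes the peeling to a box $Q(0,3R)$ and proves two competing volume estimates: a \emph{lower} bound (Lemma~\ref{L:large_6core}, via Euler's formula) showing that if the localized $6$-core meets $Q(0,R)$ and there are no long edges, then it must contain at least $cR^2/L^2$ vertices in $Q(0,3R)$; and an \emph{upper} bound (via ``rare squares'' and a missing-edge count, Lemmas~\ref{L:rare_squares} and~\ref{many_boundary_verts_lem}) showing that with high probability the localized $6$-core has at most $CRLr^2$ vertices there. These contradict for large $R$, giving Lemma~\ref{L:finite_radius_6core}. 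The dependence problem you correctly flag is then resolved not by Palm calculus or a percolation-threshold comparison but by the \emph{sealed-box} device: once $Q(x,4R)$ is $R$-sealed, the localized peeling in $Q(x,3R)$ depends only on the process in $Q(x,5R)$, making the relevant site process on $R\Z^2$ genuinely $k$-dependent, after which Lemma~\ref{lem:k_dependent_percolation} finishes the job. Your ``multi-scale / atypically dense configuration'' fallback is, in effect, a placeholder for this entire argument.
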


\begin{prop}\label{area_prop}
  Almost surely, all cells have different areas and there is no infinite
  path in $G$ with decreasing areas.
\end{prop}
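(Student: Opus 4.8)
The plan is to treat the two assertions of the proposition separately.

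\emph{All cells have distinct areas.} This is a standard absolute-continuity argument. Applying the multivariate Mecke equation in a fixed bounded window, it suffices to show that for two \emph{fixed} distinct points $x\neq y$ added to an independent copy of the process one has $\P\bigl(A(C(x))=A(C(y))\bigr)=0$, the areas being computed in the configuration $\cP\cup\{x,y\}$. The key point is that one may move a single point of $\cP$ so as to change one of the two areas while leaving the other fixed. Indeed, almost surely every Voronoi cell is a bounded convex polygon with at least three sides and the Delaunay graph is a genuine triangulation; a short argument about planar triangulations then shows that the neighbourhood of $C(x)$ is never contained in $\{y\}\cup N(C(y))$, nor the neighbourhood of $C(y)$ in $\{x\}\cup N(C(x))$, so some Delaunay neighbour $z$ of one of the two cells, say of $C(x)$, is neither $y$ nor a neighbour of $C(y)$. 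Conditioning on everything except the position of $z$, the area of $C(x)$ is a non-constant real-analytic function of that position --- it strictly decreases, say, as $z$ is moved towards $x$ along the segment $xz$, the combinatorial type being locally constant --- hence it has no atoms, while $A(C(y))$ is unaffected; so the conditional probability that the two areas agree is $0$. A countable union over pairs of cells finishes this part.

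\emph{No infinite decreasing path.} By the first part, any path along which the areas strictly decrease is cell-self-avoiding, and along an infinite such path the areas decrease to some limit $a\ge 0$; hence a tail of the path is an infinite self-avoiding path inside the subgraph of $G$ spanned by the cells of area in $[a,a+\eps]$. Fixing one small $\eps>0$ and using $a\le A(v_0)<\infty$, this interval lies inside one of the countably many windows $W_j:=[j\eps,(j+2)\eps]$. It therefore suffices to show that, almost surely, for every $j$ the graph $H_j$ spanned by the cells of area in $W_j$ has only finite connected components. By local finiteness of $G$, K\"onig's lemma, and Palm calculus --- an infinite component of $H_j$ yields a point of the process whose cell begins an infinite self-avoiding path within $H_j$, so one may work at the origin under the Palm measure, and by K\"onig's lemma again this is controlled by the presence of such paths of every finite length --- the claim reduces to showing, for each fixed $j$ (and this fixed $\eps$), that the Palm-expected number of self-avoiding Delaunay paths of length $k$ from the cell of the origin all of whose $k+1$ cell-areas lie in $W_j$ tends to $0$ as $k\to\infty$. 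One would bound this by the expected number of self-avoiding Delaunay paths of length $k$ from the origin's cell, times the probability that the $k+1$ areas along such a path all fall into the width-$2\eps$ set $W_j$.

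This last estimate is the real content of the proposition, and the step I expect to be the main obstacle, because the cell areas are \emph{not} independent of the Delaunay combinatorics --- large cells tend to have more neighbours, nearby cells have positively correlated areas --- so one cannot simply invoke that $k+1$ i.i.d.\ continuous labels are monotone along a \emph{fixed} sequence of vertices with probability $1/(k+1)!$. Two ingredients are needed. First, the expected number of self-avoiding Delaunay paths of length $k$ from a given cell grows at most exponentially in $k$; this rests on suitable tail bounds for Poisson--Delaunay vertex degrees together with the rapid (Gaussian-type) decay, in the length, of the probability of a long Delaunay edge. Second, a finite-energy property of cell areas: after conditioning on a suitable part of the configuration near a given cell, the cell's area has a bounded conditional density --- up to an atom whose location is measurable with respect to the conditioning, which must be treated separately --- so that confining all $k+1$ areas to a common window of width $2\eps$ costs a factor of order $(\kappa\eps)^k$. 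Choosing $\eps$ so small that $\kappa\eps$ times the exponential path-growth rate is $<1$ makes the series over $k$ converge, forcing the limit to be $0$; a countable union over $j$ and over the lattice boxes used in the Palm reduction then completes the proof.
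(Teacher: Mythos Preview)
Your treatment of the first assertion (distinct areas) is essentially the paper's argument: find a Delaunay neighbour $z$ of one cell that is not a neighbour of the other and perturb $z$; this is exactly Lemma~\ref{L:distinct_areas}.

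For the second assertion, you and the paper make the same reduction: an infinite path with decreasing areas has a tail whose areas all lie in a single interval of a fixed fine partition of $\R_+$, so it suffices to show that for each such interval $I$ the subgraph $H_I$ spanned by cells with area in $I$ has only finite components. From this point the two arguments diverge.

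The paper coarse-grains. It tiles the plane by boxes $Q(x,R)$, $x\in(2R\Z)^2$, and declares $x$ open if $Q(x,R)$ meets a cell with area in $I$ \emph{or} if one of two concentric boxes fails to be $\alpha$-sealed. The sealed-box device (Lemmas~\ref{L:sealed_independent} and~\ref{L:sealed_high_prob}) forces openness of $x$ to depend only on the Poisson points in $Q(x,R+4\alpha)$, so the open/closed process on $(2R\Z)^2$ is $2$-dependent. Absolute continuity of the cell-area law (Corollary~\ref{cor:partition_areas}) then makes the marginal $\P(x\text{ open})$ as small as one likes by refining the partition, and Lemma~\ref{lem:k_dependent_percolation} rules out an infinite open cluster. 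Crucially, the only probabilistic input about areas is the one-cell statement $\P(\exists\,v\in Q: A(v)\in I)<\eps/3$; no joint estimate on areas along a path is ever needed.

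Your route via path counting is different, and the step you yourself flag as ``the main obstacle'' is a real gap, not a routine verification. You need, uniformly over self-avoiding Delaunay $k$-paths, that confining all $k+1$ areas to a width-$2\eps$ window costs a factor $(\kappa\eps)^k$. But the areas of consecutive cells on a Delaunay path are tightly coupled: a Delaunay neighbour $z$ of $v_i$ lying off the path is very often also a neighbour of $v_{i-1}$ or $v_{i+1}$ (indeed, in a triangulation the neighbours of $v_i$ off the path typically border the edges $v_{i-1}v_i$ and $v_iv_{i+1}$), so perturbing $z$ moves several of the areas at once. I do not see a conditioning that produces $k$ genuinely independent degrees of freedom, each contributing a factor $O(\eps)$, while still leaving the event ``$v_0,\dots,v_k$ is a Delaunay path'' intact. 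The informal product bound you write --- expected number of length-$k$ SAWs times a uniform conditional probability that all areas lie in $W_j$ --- is likewise not an inequality until such a decoupling is supplied. The paper's sealed-square trick is precisely what your argument is missing: it manufactures honest finite-range dependence at the box scale, after which only the marginal area law matters and Lemma~\ref{lem:k_dependent_percolation} closes the argument cleanly.
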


We now exhibit a deterministic algorithm which takes as input a graph
$G=(V,E)$ with chromatic number at most $6$ and an area function
$A:V\to\R^+$ satisfying the two propositions above and returns a proper
6-coloring of the graph. Since the algorithm only depends on the graph
structure $G$ and areas $A$ which are preserved by isometries, it is clear
that when applying it to the Delaunay graph of a Poisson-Voronoi map we
will get a deterministic isometry-equivariant 6-coloring.

The algorithm starts with all vertices uncolored. Once a vertex is colored,
its color never changes. Consider first $G_M$. Each of its components is
finite and hence may be colored with 4 colors in an isometry equivariant
way (e.g.\ take the minimal coloring in lexicographic order, when the
vertices of the component are ordered by their area).

Next, having colored $G_k$, we color $G_{k-1}$ inductively. Once $G=G_0$ is
colored, we are done. Consider the vertices of $G_{k-1}\setminus G_k$. Each
has at most 5 neighbors in $G_{k-1}$. We order these vertices by increasing
areas and wish to color them in order, i.e., coloring a vertex $v$ only
after its neighbors of smaller area have been colored. The color of these
neighbors is determined using the same method in an iterative manner.
Proposition~\ref{area_prop} implies that there are just finitely many
vertices that need to be considered before $v$ (see also
Lemma~\ref{L:finite_predecessors}). Hence, going over these finitely many
vertices in order of their areas, we color each one by a color which is
unused by its neighbors (say, the minimal such color) until we finally
color $v$.

Proposition~\ref{removal_prop} is more difficult than
Proposition~\ref{area_prop} and the main lemma required for its proof
(Lemma~\ref{L:finite_radius_6core}) says that if we consider a square of
side length $6R$ and iteratively remove vertices \emph{inside} this square
having degree at most 5, then the square of side length $2R$ with the same
center will eventually become empty with probability tending to $1$ as
$R\to\infty$. This is shown using several probabilistic estimates and uses
of Euler's formula. We then show that for well separated squares of side
length $6R$, the events just described, applied to these squares, are
nearly independent. A small variation on the above event (requiring that
the boxes are also sealed; see below) makes separated boxes completely
independent. Proposition~\ref{removal_prop} then follows by standard
$k$-dependent percolation arguments. Proposition~\ref{area_prop} is proved
using a similar but easier $k$-dependent percolation argument.

As a corollary of the proofs of the above propositions we obtain that our
coloring is finitary with exponential tails. That is, for any given point
$p\in\R^2$, the probability that the color of the cell containing $p$ is not
determined by the points of the Poisson process within a ball of radius $R$ around $p$ is at most
$C e^{-c R}$ for some $C,c>0$.

Note that instead of the area $A$, we could use any other parameter of the
cell (e.g.\ diameter) which satisfies Proposition~\ref{area_prop} (in fact,
one can relax the requirement that all cells have different areas to the
requirement that adjacent cells have different areas). The sole purpose of
$A$ is to induce a well founded order on cells which would ``break ties''
when putting back vertices. We chose to use the
area because it is a very natural parameter to consider, but it is as easy
to prove the required properties for other parameters (see
Section~\ref{area_sec}). A related result is that there is no infinite
path where each Poisson point is the closest to the previous one in the path
\cite{KMN06}.

\section{Generalizations, Extensions and Questions} \label{sec:gen}

In this section we explain some variants and extensions of the
question and settings discussed in our paper.

\subsection{Randomized colorings}

The fact that there is a randomized 4-coloring scheme of the
Poisson-Voronoi map follows from the four color theorem by a soft argument.
This involves an averaging consideration of ergodic theory and works for
any amenable transitive space.

\begin{proof}[Proof of Proposition~\ref{prop:random}]
  The $4$-color theorem implies existence of a measurable function $F$ (not
  necessarily equivariant) which assigns each Voronoi diagram a
  $4$-coloring. E.g. the lexicographically minimal proper coloring is
  easily seen to be a measurable function of the map.

  To get a randomized equivariant coloring, let $\tau_x$ be a
  translation by $x\in\R^2$, $\rho_\theta$ a rotation by $\theta$, and
  $\eps$ the reflection about the $x$ axis. Let $\sigma = \tau_x \circ
  \rho_\theta \circ \eps^u$ be a random isometry, where $u\in\{0,1\}$,
  $\theta\in[0,2\pi]$ and $x\in B(0,R)$ are uniform and independent.

  This defines a probability measure $F_R$ on $4$-colored maps by
  conjugating $F$ by $\sigma$. It is clear (due to compactness of the space
  of distributions over 4-colorings) that $\{F_R\}$ has a subsequential
  weak limit as $R\to\infty$, and any such limit is an isometry equivariant
  4-coloring.
\end{proof}

\paragraph{Explicit Randomized Colorings}

While the previous argument is clearly optimal with respect to the number
of colors used, it is not constructive. It is instructive to consider an
explicit construction with 7 colors. The construction below will be
algorithmic, i.e. there is an algorithm, that determines the color of each
cell by accessing a finite (but unbounded) number of cells along with a
random independent bit for each cell.

As a first stage, we explain how to get an 8-coloring. Start by assigning a
fair coin toss to each cell independently. Consider the subgraph of
$H\subset G$ where an edge is present if its endpoints have the same coin
result. The connected components in this graph are components of site
percolation on $G$ with $p=1/2$. By a result of Zvavitch \cite{Z96}, almost
surely all connected components of both the heads and tails will be finite
(in fact, Bollob{\'a}s and Riordan \cite{BR06} proved that the critical
percolation threshold is indeed $p=\frac{1}{2}$).

Color each ``head'' component independently with colors $\{0,1,2,3\}$ in
some deterministic isometry-equivariant manner which is a function only of the
cells of this component (e.g., again, a lexicographically minimal coloring with vertex order based on cell areas). Color the ``tail'' components with
$\{4,5,6,7\}$. The result is a.s.\ a proper $8$-coloring of $G$. The
randomness comes exclusively from the coin tosses. The color of a cell is
determined by its connected component in $H$ (and the size of the
corresponding cells).

A trick suggested by Gady Kozma \cite{K07} reduces the number of colors
required to $7$ as follows. A finite planar graph embedded in the plane has
a unique unbounded face, called the{\em external} face. Attaching an
additional vertex to the vertices of the external face preserves planarity.
Thus a finite planar graph can be $4$-colored so vertices of the external
face do not use one specified color. Now color the ``heads'' components
using $\{0,1,2,3\}$ so that color $0$ does not appear at vertices of the
external face of any component. Color the ``tails'' components using
$\{0,4,5,6\}$ with the same constraint. Whenever two connected planar
graphs are jointly embedded in the plane, one is contained in the external
face of the other. Thus when a ``tails'' component is adjacent to a
``heads'' component, it is impossible for them to have adjacent vertices
colored $0$, and the coloring is proper.

As noted above, in order to determine the color of any cell, it is
sufficient to know the map structure and the coin-tosses within a ball of a
certain random radius around this cell. In addition, if one modifies the
above algorithm by initially performing fair-independent rolls of a
$3$-sided dice, instead of coin tosses (thus obtaining a proper 10-coloring
in the final outcome, after applying Kozma's trick) then the distribution
of the aforementioned radius will have exponential tails (see \cite{BR06}).
The radius for our deterministic $6$-coloring also has exponential tails, as noted in the proof outline.

\subsection{1-dimensional Poisson-Voronoi map}

The deterministic isometry equivariant chromatic number of a graph may well
be different from its usual chromatic number. For example, consider $Z^d$
translated by a uniform random variable in $[0,1]^d$ and rotated by a uniform random angle in $[0,2\pi]$.
Clearly, the
distribution of this random graph is isometry invariant and it is almost
surely 2-colorable. Yet any deterministic isometry equivariant coloring must assign the same color to all vertices
and hence cannot be proper.

A different example is furnished by the 1-dimensional Poisson-Voronoi
diagram, i.e., the ``Voronoi'' map composed of line segments around the
points of a one-dimensional standard Poisson process. This map is
2-colorable, but we claim that its deterministic isometry equivariant
coloring number is 3. First, it is seen to be at most 3 by considering the
following algorithm: First color green all cells which are shorter than
both their neighbors. Now, from each green cell, proceed to alternately
color its neighbors to the right by red and blue, until the next green cell
is reached. This produces a deterministic translation equivariant proper
3-coloring. To get an isometry equivariant coloring, instead of coloring
red and blue from left to right, start from the shorter of the two green
cells bounding the current stretch of uncolored cells.

The following lemma states that at least 3 colors are needed. A similar
argument appears in Holroyd, Pemantle, Peres and Schramm \cite{HPPS09}.

\begin{lemma} \label{L:1-dim}
  There is no deterministic translation equivariant proper 2-coloring of
  the 1 dimensional Poisson-Voronoi map.
\end{lemma}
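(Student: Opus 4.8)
The plan is to argue by contradiction, exploiting the translation invariance of the Poisson process to derive an impossible deterministic consequence. Suppose $F$ is a deterministic translation-equivariant proper $2$-coloring of the $1$-dimensional Poisson-Voronoi map. Enumerate the Poisson points along the line as $\dots < x_{-1} < x_0 < x_1 < \dots$ (with some convention for indexing, say $x_0$ is the first point to the right of the origin), and let $c_i \in \{0,1\}$ be the color $F$ assigns to the cell of $x_i$. Since the coloring is proper and the Delaunay graph in one dimension is just the two-way-infinite path $\dots - x_{-1} - x_0 - x_1 - \dots$, properness forces $c_{i+1} = 1 - c_i$ for all $i$. Hence the entire coloring is determined by a single bit, say $b := c_0$: the coloring is one of exactly two "phases", alternating starting from even or odd parity.

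The key step is to observe that this bit $b$ must be a deterministic measurable function of the point configuration, yet translation equivariance constrains how it transforms. Translating by a single "gap" — more precisely, consider the configuration shifted so that what was $x_1$ becomes the $0$-indexed point — flips the parity and hence flips $b$. So $b$ is a function $\M \to \{0,1\}$ that is invariant under translations fixing the indexing but flips under the natural "shift by one point" map. The contradiction comes from translation invariance of $\P$ together with ergodicity (or just a direct mixing/tail argument): the bit $b$ is measurable with respect to the full configuration, but I claim it cannot be consistently defined. Concretely, one can set up a coupling: take a Poisson configuration $\cP$ and a small continuous translation $\tau_t \cP$ for $t$ ranging over an interval; for $t$ smaller than the gap to the left of the origin, the indexing is unchanged so $b(\tau_t\cP) = b(\cP)$ by equivariance, while the color of any fixed cell is locally constant in $t$. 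But as $t$ crosses a point of $\cP$, the $0$-indexed point changes, the parity flips, so $b$ flips — yet the color of a cell far away hasn't changed, contradiction. Said differently: let $A_0$ be the event "the cell containing the origin has color $0$". Equivariance gives $\P(A_0) = \P(\tau_t^{-1}A_0)$ for all $t$, but as we showed, translating past one Poisson point flips the color of every cell simultaneously; combining these gives that the color of the origin's cell both is and is not flipped after an integer number of such crossings, depending on parity, which is incompatible with $F$ being a well-defined function.

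The cleanest way to package this, which I would use, is via the standard "mass-transport / parity obstruction" argument as in Holroyd--Pemantle--Peres--Schramm: by equivariance and invariance of $\P$, the probability that the cell to the right of a given cell has a strictly larger (or smaller) index-parity-based color is $\tfrac12$ each by symmetry, but the alternating structure forces a global consistency that a translation-invariant measure cannot support — essentially because there is no translation-invariant way to $2$-color $\Z$ (the set of Poisson points is in equivariant bijection with a translation of $\Z$, and $2$-coloring it equivariantly would pick out a parity class, which is impossible).

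The main obstacle is making the "no equivariant parity class" intuition fully rigorous: one must rule out that $b$ depends on the configuration in some subtle tail-measurable way that somehow respects equivariance. I expect this is handled exactly as in \cite{HPPS09} — one shows that the two phases of the coloring differ by the "shift by one point" map, this map is measure-preserving and ergodic on the Poisson space (or at least one can produce two configurations in the support that are related by such a shift but assigned the same phase label, violating equivariance), and hence no measurable equivariant choice of phase exists. I would cite the HPPS argument for this ergodic-theoretic core and spend the bulk of the proof on the elementary observation that properness forces the two-phase structure.
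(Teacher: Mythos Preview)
Your reduction to a single ``phase bit'' is correct: properness of a $2$-coloring on the bi-infinite path forces strict alternation, so the coloring is determined by the color of any one cell. But the proposal stops short of actually deriving the contradiction. The continuous-translation sketch is confused: under equivariance, $\tau_t$ moves the entire colored picture rigidly, so individual cells keep their colors; only \emph{which} cell contains the origin changes, and tracking that alone does not yield a contradiction. The ``mass-transport / parity'' remark is too vague to evaluate. Your final plan --- reduce to the impossibility of an equivariant parity class and cite \cite{HPPS09} --- can be made rigorous (on the Palm space the point-shift $T$ is measure-preserving and mixing; the identity $c\circ T=1-c$ forces $c$ to be $T^2$-invariant, hence a.s.\ constant by ergodicity of $T^2$, contradicting $\P(c=0)=\P(c=1)$), but you have deferred exactly the step that needs work, and \cite{HPPS09} contains a \emph{similar argument}, not a theorem you can invoke as a black box.

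The paper takes a different, more elementary route that avoids ergodic theory entirely. It approximates the equivariant coloring $\cA$ by a scheme $\cB$ whose color at the origin depends only on the process in $[-L,L]$ and agrees with $\cA$ with probability at least $7/8$. For $x>2L$, the $\cB$-colors at $0$ and at $x$ are functions of the process in the disjoint intervals $[-L,L]$ and $[x-L,x+L]$, while the $\cA$-colors of the two cells together determine the parity of the number of Poisson points between them. That parity depends on the process in $[L,x-L]$, is independent of the $\cB$-colors, and is asymptotically uniform as $x\to\infty$; this is incompatible with $\cA$ and $\cB$ agreeing at both cells with probability at least $3/4$. Your approach, once the ergodic step is filled in, is more structural and explains \emph{why} the obstruction exists; the paper's is self-contained and short.
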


\begin{proof}
  In order to reach a contradiction, suppose $\cA$ is such a coloring
  scheme. Since $\cA$ is measurable there exists an integer $L$ and
  another scheme $\cB$, such that the color $\cB$ assigns to the cell at
  the origin depends only on the Poisson process in the interval $[-L,L]$
  and the probability that $\cA$ and $\cB$ assign the same color to a given
  cell is at least $\frac78$. Consider also another point $x>2L$. By
  translation equivariance, the $\cB$-color of the cell of $x$ is
  determined by the Poisson points in $[x-L,x+L]$.

  Hence, with probability at least $\frac34$ the $\cA$-color of both these
  cells is the same as their $\cB$-color. However, The $\cA$-colors of
  these cells determine the parity of the number of cells (i.e.\ points)
  between them. But the parity of the number of points of the Poisson
  process in $[L,x-L]$ is independent of the $\cB$-colors of the origin and
  of $x$, and tends to a uniform on $\{0,1\}$ as $x\to\infty$. Therefore,
  when $x$ is large enough there is a positive probability of a
  contradiction between this parity and the $\cA$-colors of the origin and
  $x$, so this $\cA$ coloring cannot exist.
\end{proof}

We remark that a variant of the $3$-coloring above can be used to color any
invariant point process on $\R$ that is not an arithmetic progression (so
that not all points are isomorphic). Furthermore, the proof of
impossibility with 2 colors also applies to more general processes as we only
use the fact that the parity of the number of points in $[L,x-L]$ is not (nearly)
determined by the process in $[-L,L]$ and $[x-L,x+L]$ for $x$ large enough.

\subsection{Higher dimensional Poisson-Voronoi maps}

A natural generalization of our setting is to consider the 3-dimensional
Poisson-Voronoi diagram. In this case it is not obvious whether one can
properly color the diagram with finitely many colors even without the
isometry equivariant condition. Dewdney and Vranch~\cite{DV77}, and
Preparata~\cite{P77} discovered that $n$ Voronoi cells in $\R^3$ may be all
pairwise adjacent. Indeed, \cite{DV77} shows that in $\R^3$, the Voronoi
cells of $(x_i,x_i^2,x_i^3)_{i=1}^n$ satisfy this for any
$\{x_1,\dots,x_n\}$. Since pairwise adjacency is preserved by sufficiently
small perturbations, and since such configurations a.s.\ appear in the
Poisson process, this implies that the chromatic number of the 3-dimensional
Poisson-Voronoi diagram is almost surely infinite. Higher
dimensional analogues also exist.

Following Proposition~\ref{removal_prop}, one can still ask, as a weaker
result than having an isometry equivariant coloring, what is the minimal
$k$ such that if we iteratively remove all cells having degree at most $k$
we remain with finite components only? Such a $k$ necessarily exists by
arguments similar to those of Proposition~\ref{removal_prop}. (Simulations
indicate that $k=12$ may suffice in $\R^3$.)

\subsection{Ramblings and open questions}

\paragraph{Fewer colors.}
Is there a deterministic $4$-coloring of the Poisson-Voronoi map?
Theorem~\ref{thm:main} shows that $6$ colors suffice, while obviously at
least $4$ are needed. Recent work by Adam Timar \cite{Timar} (in
preparation) shows the existence of deterministic, equivariant 5-colorings
using different methods. Our own methods are close to giving a $5$-coloring
as well, in the following sense: Suppose we define $G_{k+1}$ by removing
from $G_k$ all vertices of degree at most $4$. If
Proposition~\ref{removal_prop} still holds then the same argument gives a
$5$-coloring of $G_0$. To show this, it is enough to prove a statement
similar to Lemma~\ref{L:finite_radius_6core} (roughly put, that the
probability that a large component of the 5-core intersects the boundary of
a box of size $R$ is small enough for some value of $R$). Simulations
suggest that this is indeed the case.

A small difficulty involved in the case of $5$ colors is that not every
vertex is removed at some finite stage. Indeed, the 5-core of the Delaunay
triangulation will not be empty, since it contains finite sub-graphs with
minimal degree 5. The smallest such sub-graph is the dodecahedron,
involving 12 vertices.

Applying the same proof for 4 colors cannot work, since the 4-core of the
Delaunay triangulation has an infinite component. Indeed, a vertex of
degree 3 is necessarily in the interior of the triangle formed by its neighbors. It is straightforward to
check that there are no infinite chains of triangles each one inside the next
(since the probability of long edges decays exponentially; see also
\lemref{rare_squares} below). Therefore, one can consider all the
\emph{maximal} triangles in the Delaunay triangulation. This is also a
triangulation of the plane, since every triangle is contained in a maximal
one, and these are all disjoint. All the vertices of this triangulation
also belong to $G_\infty$ (since none of them are in the interior of
another triangle), and they are all in the same connected component, which
is therefore infinite.

Finally, while we only prove that some $G_M$ (again, deleting vertices of degree $<6$) has only finite connected
components, simulations suggest that $M=2$ suffices while $M=1$ does not.
In fact, it appears sufficient to delete in the second iteration roughly
half the vertices of degree at most 5. Can one prove any of these
assertions?

\paragraph{Other properties of colorings.}
If there is no deterministic $4$-coloring, one could consider intermediate
properties between deterministic and unrestricted randomized colorings. For
example, one may seek colorings that are ergodic, mixing, finitary, etc. Such properties were first brought to our attention by Russ Lyons \cite{L07}.

\paragraph{Other planar processes.}
It might be more interesting to consider other translation or isometry
equivariant graph processes in the plane. These could be the Voronoi
tessellation of some point process or more general planar graph processes.
Except for some obvious counterexamples (see remarks before and after \lemref{1-dim}),
is it true that every such process can be colored deterministically with
$4$ colors? The aforementioned work of Timar \cite{Timar} shows the existence of deterministic 5-colorings.

\paragraph{Hyperbolic geometry.}
What can be done in the hyperbolic plane? Our argument can be adapted to
give a deterministic coloring. However, the number of colors diverges as
the density of the Poisson process tends to $0$, since the average degree
diverges. For high enough density we can get a deterministic $6$-coloring.
Is there a (deterministic or randomized) $k$-coloring with $k$ independent
of the density? While the Poisson-Voronoi map is $4$-colorable by Proposition~\ref{prop:random}, our
randomized constructions use amenability and fail for the hyperbolic plane.

\paragraph{Prescribed color distribution.}
What color distributions are achievable (with deterministic or randomized
colorings)? We only show that coloring schemes exist such that the color of
(say) the cell of 0 is supported on a finite set. If one asks for a
particular distribution the question is interesting also in $\R^d$ for
$d>2$. For example, in $\R^d$, it is possible to get a coloring so that color $i$ appears
with exponentially (in $i$) small probability. What is the
minimal possible entropy of the color of a cell?

\paragraph {Fire percolation.}
Given a set $S_0$ of vertices in the Delaunay triangulation, let $S_k$ be
all vertices at graph distance exactly $k$ from $S_0$. Is it possible to
select a set $S_0$ in a deterministic equivariant manner, so that for all
$k$, $S_k$ has only finite connected components? If the answer is yes, then
coloring the components of $S_k$ for even $k$ with colors $\{0,1,2,3\}$ and
the components for odd $k$ by $\{4,5,6,7\}$ results in a deterministic
8-coloring of the Poisson-Voronoi map. Kozma's aforementioned trick can be
used to get a 7-coloring in this way.

\section{Proof of the main result} \label{sec:proof}

In this section we prove Theorem~\ref{thm:main}. As explained in the proof
outline, the proof is based on Propositions~\ref{removal_prop} and
\ref{area_prop}. These in turn will be proved by reduction to $k$-dependent
percolation. Section \ref{dep_perc_sec} below gives the basic fact about
$k$ dependent percolation we shall need and introduces sealed squares, the
tool which allows us to deduce that events taking place in distant
locations are almost independent. In Section~\ref{area_sec} we prove the
simpler Proposition~\ref{area_prop} and in Section~\ref{removal_sec} the
more difficult Proposition~\ref{removal_prop}. Section~\ref{coloring_sec}
shows how to deduce the main result from the two propositions.

\textbf{Notation:} Throughout we shall denote by $G=(V,E)$ the Delaunay
graph embedded in the plane where $V$ is the set of points of the Poisson
process and the edges are straight lines connecting these points (this can
be seen to be a planar representation of $G$). We will sometimes call the
vertices \emph{centers} and say that a Voronoi cell is \emph{centered} at
its vertex. We also let $A:V \to \R_+$ be the function which assigns to
each vertex the area of the corresponding Voronoi cell. For $x\in\R^2$ we
denote $Q(x,R):=x+[-R,R]^2$, i.e., a square centered at $x$ of side length
$2R$. We let $B_R(x)$ or $B(x,R)$ stand for a closed ball of radius $R$
around $x$ (in the Euclidean metric). We write $d(x,y)$ for the Euclidean
distance between $x,y\in\R^2$. Similarly $d(x,U):=\inf\{d(x,y)\ |\ y\in
U\}$ for sets $U\subseteq\R^2$.

\subsection{Dependent percolation and sealed squares} \label{dep_perc_sec}

A process $\{A_x\}_{x\in\Z^2}$ is said to be {\em $k$-dependent} if for any
sets $S,T \subset \Z^2$ at $\ell^\infty$-distance at least $k$, the
restrictions of $A$ to $S$ and to $T$ are independent. Our processes will
always take values in $\{0,1\}$. Vertices $x\in\Z^2$ with $A_x=1$ are
called {\em open} (and others are {\em closed}). An {\em open component} is
a connected component in $\Z^2$ of open vertices.

A well known result of Liggett, Schonmann and Stacey \cite{LSS96} states
that $k$-dependent percolation with sufficiently small marginals ($\E A_x$)
is dominated by sub-critical Bernoulli percolation. The following simple
lemma is weaker, and is a standard argument in percolation theory. We
include a proof for completeness:

\begin{lemma}\label{lem:k_dependent_percolation}
  For any $k$ there is some $p_0=p_0(k)<1$ such that if
  $\{A_x\}_{x\in\Z^2}$ is $k$-dependent and for all $x$, $P(A_x=1)\leq
  p_0$, then
  \[
  \P(\exists \text{ an infinite open component})=0.
  \]
\end{lemma}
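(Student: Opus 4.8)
The plan is to prove Lemma~\ref{lem:k_dependent_percolation} by a standard Peierls-type counting argument, combined with the observation that $k$-dependence becomes genuine independence once one spreads out along a sparse enough sub-lattice. First I would reduce to the case $k=1$ by a rescaling: partition $\Z^2$ into blocks of side $k$ (i.e.\ consider the sub-lattice $k\Z^2$, each of whose points indexes a $k\times k$ box), and declare a block open if it contains at least one open vertex of the original process. Two blocks at $\ell^\infty$-distance $\geq 2$ in the rescaled lattice correspond to original sets at $\ell^\infty$-distance $\geq k$, hence are independent; so the block process is $2$-dependent, and by a further trivial rescaling (or by working directly with a $2$-dependent process) it suffices to handle $k$-dependent percolation for a fixed small value of $k$. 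Also, an infinite open component in the original process forces an infinite open component in the block process, so it is enough to kill infinite clusters there. The marginal of a block being open is at most $k^2 p_0$, which we can make as small as we like by choosing $p_0$ small.

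Next I would run the contour/path-counting estimate. If there is an infinite open component containing the origin (it suffices to bound $\P(0\text{ is in an infinite open component})$ and then sum over the countably many vertices, or invoke translation-invariance if present; to be safe one simply bounds $\P(\exists\text{ infinite cluster})\le\sum_x\P(x\text{ in infinite cluster})$ after first noting that it is enough that \emph{some} fixed vertex lies in an infinite cluster with positive probability, by an FKG-free union bound over a growing family — actually cleanest is: an infinite cluster exists iff for every $n$ there is an open self-avoiding path of length $n$ from some vertex in $[-n,n]^2$, so it suffices to show the expected number of open self-avoiding paths of length $n$ starting at a fixed vertex tends to $0$). The number of self-avoiding paths of length $n$ in $\Z^2$ starting at a given vertex is at most $4\cdot 3^{n-1}$. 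Along such a path, select a sub-collection of vertices that are pairwise at $\ell^\infty$-distance $\geq k$: greedily, from $n$ vertices on a path one can always extract at least $\lceil n/c_k\rceil$ such vertices, where $c_k$ depends only on $k$ (each chosen vertex forbids only the boundedly many path-vertices within distance $k$). These selected vertices are mutually independent, so the probability that all vertices of the path are open is at most $(\text{marginal})^{n/c_k}\le (k^2p_0)^{n/c_k}$.

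Combining, the expected number of open self-avoiding paths of length $n$ from a fixed vertex is at most $4\cdot 3^{n-1}(k^2p_0)^{n/c_k}$, which decays to $0$ geometrically provided $3\,(k^2p_0)^{1/c_k}<1$, i.e.\ provided $p_0<k^{-2}3^{-c_k}=:p_0(k)$. Hence almost surely there is no infinite open self-avoiding path, so no infinite open component, first in the block process and therefore also in the original $k$-dependent process. This proves the lemma with the stated $p_0(k)<1$.

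I do not expect any serious obstacle here; the one point requiring a little care is the extraction of an independent sub-family from a self-avoiding path — one must check that the greedy procedure really yields a linear-in-$n$ number of vertices with the required pairwise separation, uniformly over all self-avoiding paths. This is immediate because a ball of $\ell^\infty$-radius $k$ contains only $(2k+1)^2$ lattice points, so after choosing a path-vertex we delete at most $(2k+1)^2$ path-vertices and repeat; thus one may take $c_k=(2k+1)^2$. The rest is the textbook Peierls bound, and no use of the sharper Liggett--Schonmann--Stacey domination is needed.
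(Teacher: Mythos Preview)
Your proof is correct and follows essentially the same Peierls path-counting argument as the paper: bound the number of self-avoiding paths of length $n$ from a fixed vertex, extract a linear-in-$n$ subcollection of pairwise $k$-separated (hence independent) vertices along each path, and conclude when the marginal is small enough. The paper omits your block-rescaling preamble (which is unnecessary, since you end up running the direct argument for general $k$ anyway) and extracts the $k$-separated sub-family by pigeonhole on the $k^2$ residue classes of $\Z^2/(k\Z)^2$ rather than greedily, yielding the slightly sharper threshold $p_0=4^{-k^2}$.
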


\begin{proof}
  The number of simple paths of length $L$ starting at a given $x\in
  \Z^2$ is bounded by $4^L$. Any simple path of length $L$ contains
  at least $\frac{L}{k^2}$ coordinates which are pairwise $k$-separated.
  Thus, the probability that any given path of length $L$ is open is at
  most $p_0^{L/k^2}$.
  The expected number of open paths originating at $x$ is bounded by
  \[
  4^L \cdot p_0^{L/k^2} = \left(4 p_0^{1/k^2}\right)^L.
  \]
  If $p_0 < 4^{-k^2}$ this quantity tends to $0$ as $L$ tends to infinity.
  However, an infinite open component must contain an open path of any length.
\end{proof}

\begin{defn}
  A set $S\subset\R^2$ is called {\em $\alpha$-sealed} w.r.t.\ the Poisson
  process $V$ if $d(x,V)\le \alpha$ for every point $x\in\partial S$.
\end{defn}

Thus a set is sealed if the point process is not far from any point on the
boundary of $S$. This implies that the Voronoi cells of $V$ which intersect
the boundary of $S$ are centered near the boundary. The purpose of this
notation is that it bounds the dependency between the Voronoi map inside
and outside the set. For a set $S$ we denote
\[
S^\alpha = \{x \in \R^2 : d(x,S)\leq \alpha\}
\]
i.e.\ the closed (Euclidean) $\alpha$-neighborhood of $S$ (so that
$\alpha$-sealed is equivalent to $\partial S\subset V^\alpha$). Note that
being $\alpha$-sealed is determined by $V\cap (\partial S)^\alpha$. We
denote by $S^{-\alpha}$ the points at distance at least $\alpha$ from the
complement $S^c$ (the idea is that if $S=B_R(x)$ then $S^\alpha =
B_{R+\alpha}(x)$ for any $\alpha\ge -R$).

\begin{lemma}\label{L:sealed_independent}
  Condition on the points of $V\cap (\partial S)^\alpha$. On the event that
  $S$ is $\alpha$-sealed, the Voronoi map in $S^{-\alpha}$ is determined by
  the process $V\cap S^\alpha$. Moreover, the cell as well as all neighbors
  of $x\in V\cap S^{-\alpha}$ are contained in $S^\alpha$.
\end{lemma}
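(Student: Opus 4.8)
The plan is to extract one \emph{shielding} fact from the sealing hypothesis and then read off all three assertions from it. Throughout, work on the event that $S$ is $\alpha$-sealed, so that $\partial S\subseteq V^\alpha$; note that $S\subseteq S^\alpha$ and, since $\alpha>0$, every point of $S^{-\alpha}$ is an interior point of $S$. The fact I would first establish is: for every $z\in S$ and every $v\in V\setminus S^\alpha$ there is $v'\in V\cap S^\alpha$ with $d(z,v')<d(z,v)$. Its proof is elementary plane geometry: $d(v,S)>\alpha>0$ forces $v\notin S$, so the segment $[z,v]$ meets $\partial S$ at a point $y$; sealing supplies $v'\in V$ with $d(y,v')\le\alpha$, whence $d(v',S)\le d(v',y)\le\alpha$, i.e.\ $v'\in S^\alpha$, and
\[
d(z,v')\le d(z,y)+d(y,v')\le d(z,y)+\alpha<d(z,y)+d(y,v)=d(z,v),
\]
the strict step using $y\in[z,v]$ together with $d(y,v)\ge d(v,\overline{S})=d(v,S)>\alpha$.

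The first assertion is then immediate: for $z\in S^{-\alpha}\subseteq S$ the shielding fact says that every point of $V\setminus S^\alpha$ is strictly farther from $z$ than some point of $V\cap S^\alpha$, so the set of nearest neighbors of $z$ in $V$, and hence the Voronoi partition of $S^{-\alpha}$, is already a function of $V\cap S^\alpha$. For the cell containment, fix $x\in V\cap S^{-\alpha}$; then $d(x,\partial S)\ge d(x,S^c)\ge\alpha$, and since a.s.\ no point of $V$ lies on the Lebesgue-null set $\{z:d(z,S^c)=\alpha\}=\partial(S^{-\alpha})$ (a square's boundary when $S$ is a square), in fact $d(x,\partial S)>\alpha$. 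If $z\in C(x)$ but $z\notin S^\alpha$ then $z\notin S$, so $[x,z]$ meets $\partial S$ at a point $y$ with $d(x,y)>\alpha$, and sealing yields $v'\in V$ with $d(z,v')\le d(z,y)+\alpha<d(z,y)+d(x,y)=d(x,z)=d(z,V)$, which is impossible; hence $C(x)\subseteq S^\alpha$.

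For the neighbors (the step that needs one further idea), let $y$ be a Delaunay neighbor of $x\in V\cap S^{-\alpha}$ and pick $w\in C(x)\cap C(y)$, so that $r:=d(w,x)=d(w,y)=d(w,V)$; then no point of $V$ lies at distance $<r$ from $w$, while $w\in C(x)\subseteq S^\alpha$ by the previous step. Suppose $y\notin S^\alpha$, so $y\notin S$. If $w\notin S$, the segment $[x,w]$ crosses $\partial S$ at a point $p$ with $d(x,p)>\alpha$, so $d(p,w)=d(x,w)-d(x,p)=r-d(x,p)<r-\alpha$, and sealing at $p$ gives $v'\in V$ with $d(w,v')\le d(w,p)+\alpha<r$, a contradiction. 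If $w\in S$, the segment $[w,y]$ crosses $\partial S$ at a point $q$ with $d(q,y)\ge d(y,S)>\alpha$, so $d(w,q)=r-d(q,y)<r-\alpha$, and the same step at $q$ again forces a point of $V$ at distance $<r$ from $w$. Either way $y\in S^\alpha$.

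The main obstacle I anticipate is the degenerate case $d(x,S^c)=\alpha$, i.e.\ $x\in\partial(S^{-\alpha})$: there the strict inequalities above degrade to equalities, and in principle a Lebesgue-null sliver of a cell could protrude past $S^\alpha$. This is excluded almost surely for the Poisson process, since the set of such $x$ is null (for instance a square's boundary when $S$ is a square), and it is the one place where the argument is probabilistic rather than purely geometric; everything else is just the triangle inequality combined with the two observations that a point of $V$ near a sealed crossing of $\partial S$ shields the interior, and that $B(w,d(w,V))$ contains no point of $V$ in its interior.
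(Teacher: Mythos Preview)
Your proof is correct and follows essentially the same geometric idea as the paper's: sealing forces every point of $\partial S$ to lie in a Voronoi cell whose center is within $\alpha$ of $\partial S$, so these cells shield $S^{-\alpha}$ from $\R^2\setminus S^\alpha$. The paper phrases this as a one-line separation statement (``the cells of centers in $(\partial S)^\alpha$ separate $S^{-\alpha}$ from $\R^2\setminus S^\alpha$'') and leaves the details to the reader; you instead isolate the shielding inequality explicitly and apply it pointwise, and you are more careful than the paper about the degenerate case $d(x,\partial S)=\alpha$, which the paper glosses over.
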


\begin{proof}
  The lemma follows from the following simple geometrical fact: If $V\cap
  (\partial S)^\alpha$ is such that $S$ is $\alpha$-sealed, then the center
  of the cell of any $z\in\partial S$ is in $(\partial S)^\alpha$. Thus the
  cells of centers in  $(\partial S)^\alpha$ separate $S^{-\alpha}$ from
  $\R^2 \setminus S^\alpha$.
  It follows that the cell of $x\in S^{-\alpha}$ is contained in $S$, and is
  adjacent only to cells centered in $S^\alpha$.
\end{proof}

Next we argue that squares are likely to be $\alpha$-sealed

\begin{lemma}\label{L:sealed_high_prob}
  The probability that $Q(0,R)$ is not $\alpha$-sealed is at most
  \[
  \lceil 8 R/\alpha \rceil e^{-\pi\alpha^2/4}.
  \]
\end{lemma}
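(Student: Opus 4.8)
The plan is a straightforward net-and-union-bound argument, and I expect no real obstacle beyond getting the constants to line up with the stated bound. First I would observe that $Q(0,R)$ fails to be $\alpha$-sealed precisely when there is a point $x\in\partial Q(0,R)$ with $B(x,\alpha)\cap V=\emptyset$, i.e.\ an empty disc of radius $\alpha$ centered on the boundary. Since $\partial Q(0,R)$ is a closed curve of length $8R$, I would place $m:=\lceil 8R/\alpha\rceil$ points $x_1,\dots,x_m$ on it, equally spaced along the curve, so that consecutive points are at arc-distance $8R/m\le\alpha$; then every point of $\partial Q(0,R)$ lies within arc-distance $\alpha/2$, hence within Euclidean distance $\alpha/2$, of some $x_i$.

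The key (elementary) geometric point is the containment: if $x\in\partial Q(0,R)$ and $x_i$ is a net point with $d(x,x_i)\le\alpha/2$, then $B(x_i,\alpha/2)\subseteq B(x,\alpha)$ by the triangle inequality. Consequently, on the event that $Q(0,R)$ is not $\alpha$-sealed, witnessed by some such $x$, at least one of the discs $B(x_i,\alpha/2)$ contains no point of $V$. A union bound then gives
\[
\P\bigl(Q(0,R)\text{ is not }\alpha\text{-sealed}\bigr)\le\sum_{i=1}^m \P\bigl(V\cap B(x_i,\alpha/2)=\emptyset\bigr).
\]
Finally I would invoke the void probability of a unit-intensity Poisson process, $\P(V\cap A=\emptyset)=e^{-|A|}$ for measurable $A$, so that each summand equals $e^{-\pi(\alpha/2)^2}=e^{-\pi\alpha^2/4}$, and the right-hand side becomes $m\,e^{-\pi\alpha^2/4}=\lceil 8R/\alpha\rceil e^{-\pi\alpha^2/4}$, as claimed.

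The only points requiring any care — and they are minor — are the bookkeeping that $\lceil 8R/\alpha\rceil$ equally spaced points on a length-$8R$ curve really do leave every boundary point within distance $\alpha/2$ (which forces the net spacing to be $\le\alpha$ and the half-spacing $\le\alpha/2$), and the choice of disc radius $\alpha/2$ together with the containment $B(x_i,\alpha/2)\subseteq B(x,\alpha)$, which is exactly what makes the exponent come out as $\pi\alpha^2/4$ rather than something larger.
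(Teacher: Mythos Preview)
Your proof is correct and is essentially identical to the paper's own proof: both take an $\alpha/2$-net of size $\lceil 8R/\alpha\rceil$ on $\partial Q(0,R)$, observe that if every net point has a Poisson point within $\alpha/2$ then the square is $\alpha$-sealed, and conclude by a union bound using the void probability $e^{-\pi\alpha^2/4}$. Your write-up simply makes the triangle-inequality step and the constant bookkeeping explicit.
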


\begin{proof}
  Take an $\alpha/2$ net in $\partial Q(0,R)$, of size $\lceil 8R/\alpha
  \rceil$. Each of these points fails to have a center within distance
  $\alpha/2$ from it with probability $e^{-\pi \alpha^2/4}$. If none fail
  to have such a nearby center then the square is $\alpha$-sealed. A union
  bound gives the claim.
\end{proof}

\subsection{Areas behave --- Proposition~\ref{area_prop}} \label{area_sec}

Our present goal is to prove Proposition~\ref{area_prop}. To this end we
need two properties of the areas of Poisson-Voronoi cells.

\begin{lemma}\label{L:cont_areas}
  Let $\mu$ be the law of the area of the cell containing the origin, then
  $\mu$ is absolutely continuous w.r.t.\ the Lebesgue measure.
\end{lemma}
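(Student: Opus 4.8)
The plan is to show that the law $\mu$ of the area $A$ of the cell containing the origin has no atoms — actually, more is needed: absolute continuity with respect to Lebesgue measure. I would approach this via a size-biasing / Palm-calculus argument, or more hands-on, by directly constructing a perturbation that changes the area by a controlled amount. The cleanest route uses the following observation: conditioned on the configuration of Poisson points outside a small ball $B(0,\rho)$ that are relevant to the cell of the origin, together with the event that the origin's cell is centered at some point $x_0$ inside $B(0,\rho)$, the area $A$ is a function of the position of $x_0$ (and finitely many neighboring centers). The idea is to exhibit enough ``freedom'' in one of these center positions so that, after conditioning on everything else, the area has a density.

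Here is the concrete version I would carry out. First, use the fact (standard, and essentially a consequence of the exponential decay of long edges alluded to in the excerpt near \lemref{rare_squares}) that a.s. the cell of the origin is determined by finitely many Poisson points, and its center $x_0$ and all its neighboring centers lie in some bounded ball. Condition on the event $E_{n}$ that the cell of the origin is an $n$-gon with a prescribed combinatorial structure, and condition further on the positions of all but one of the relevant centers — call the free one $y$. The key geometric claim is that, on a positive-probability sub-event, as $y$ ranges over an open set (keeping the combinatorial type fixed), the area $A(y)$ of the origin's cell is a non-constant real-analytic (in fact piecewise-rational) function of $y\in\R^2$. Since a non-constant real-analytic function on an open subset of $\R^2$ pushes forward Lebesgue measure to a measure absolutely continuous with respect to Lebesgue measure on $\R$ (its level sets have measure zero and the coarea formula applies), the conditional law of $A$ given this sub-event is absolutely continuous. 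Because $\mu$ is a countable mixture (over combinatorial types and discretized positions of the other centers) of such conditionally absolutely continuous laws, $\mu$ itself is absolutely continuous.

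The steps in order: (i) establish the a.s. finiteness/locality of the cell of the origin and reduce to working inside a fixed bounded window, citing the exponential tail on edge lengths; (ii) decompose $\mu$ into a countable mixture over the combinatorial type of the cell and a fine partition of the positions of all relevant centers except one designated center $y$, noting the Poisson intensity restricted to the window is mutually absolutely continuous with Lebesgue measure so the conditional law of $y$ has a density; (iii) write the area $A$ explicitly as a function of $y$ for fixed combinatorial type — for a Voronoi cell this is a rational function of the coordinates of $y$ and the (now fixed) other centers, since the vertices of the cell are intersection points of perpendicular bisectors — and verify it is non-constant on the relevant open set (moving one neighbor's center genuinely changes the area, which one can see by a crude monotonicity or by evaluating at two configurations); (iv) invoke the fact that the image of (a density times) Lebesgue measure under a non-constant real-analytic map $\R^2\to\R$ is absolutely continuous; (v) assemble the mixture.

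The main obstacle I expect is step (iii): carefully arguing that the area, as a function of the single free center $y$ with the combinatorial type of the Delaunay triangulation held fixed, is genuinely non-constant — one must ensure the chosen open set of $y$-values is such that the combinatorial type really is constant there (no Delaunay edge flips, no co-circular degeneracies) and simultaneously the area varies. A clean way to finesse this is to pick $y$ to be a center at graph distance $2$ from the origin's center, not a direct neighbor, whose small movement provably does not alter the combinatorics near the origin yet does move one of the perpendicular bisectors bounding a neighbor of the origin's cell, hence changes $A$; alternatively one can avoid the explicit function entirely and argue by a Cameron–Martin–type absolute-continuity statement for the Poisson process under a small deterministic shift of one point, comparing $A$ before and after. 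Either way, once non-constancy and local analyticity are in hand, absolute continuity of $\mu$ follows routinely.
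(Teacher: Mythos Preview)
Your main argument---condition on all relevant centers except one neighboring center $y$, then observe that the area is a non-constant smooth function of $y$ and push forward---is correct and is essentially the paper's approach. The paper streamlines the execution: rather than letting $y$ vary in $\R^2$ and invoking real-analyticity together with the coarea formula, it conditions additionally on the \emph{direction} of $y-x_0$ and lets only the scalar $r=\|y-x_0\|$ vary. Then $A$ is a one-variable differentiable function of $r$ whose derivative equals the length of the shared Voronoi edge between the cells of $x_0$ and $y$, which is strictly positive. This one-dimensional monotonicity makes the absolute continuity immediate and sidesteps your worry about holding the combinatorial type fixed.

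One point in your final paragraph is wrong, however: if you take $y$ at graph distance $2$ from $x_0$, then moving $y$ (while preserving the combinatorial type) does \emph{not} change the area of the origin's cell at all. The cell of $x_0$ is bounded only by the perpendicular bisectors between $x_0$ and its Delaunay neighbors; a distance-$2$ center contributes no such bisector, so $A$ is locally constant in that $y$. Hence that ``finesse'' cannot work and you must keep $y$ as a direct neighbor. Fortunately your underlying concern is easy to dispatch anyway: for a.e.\ configuration (no four cocircular points) the Delaunay combinatorics are stable under sufficiently small perturbations of any single center, so there is always an open set of positions for a neighboring $y$ on which the combinatorial type is fixed and $A(y)$ is non-constant---or, as the paper does, simply reduce to one radial variable and use monotonicity.
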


A partition of $\R_+$ is a finite union $\R_+ = \bigcup_{i<M}
[x_i,x_{i+1})$, given by a sequence $0 = x_0 < x_1 < \cdots < X_M = \infty$.
The following is an immediate corollary of \lemref{cont_areas}.

\begin{cor} \label{cor:partition_areas}
  For any $R,\eps>0$ there is some sufficiently refined partition $\cA$
  of $\R_+$ such that for every interval $I \in \cA$ the probability that
  there exists $v \in V \cap B_R(0)$ with $A(v) \in I$ is at most $\eps$.
\end{cor}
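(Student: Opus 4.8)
The plan is to deduce Corollary~\ref{cor:partition_areas} directly from \lemref{cont_areas} by a straightforward union bound over the (finitely many in expectation, and dominated in distribution) points of $V$ inside $B_R(0)$. First I would fix $R$ and $\eps>0$ and consider the intensity measure of the point process $\{A(v) : v\in V\cap B_R(0)\}$ on $\R_+$; by the Mecke equation (Campbell's formula for the Poisson process), for any Borel set $I\subseteq\R_+$ the expected number of $v\in V\cap B_R(0)$ with $A(v)\in I$ equals $\int_{B_R(0)} \P_x(A(x)\in I)\,dx$, where $\P_x$ denotes the Palm probability, i.e.\ the law of the Voronoi tessellation of $V\cup\{x\}$ with $x$ added as a point. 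By translation invariance this is $\pi R^2 \cdot \nu(I)$, where $\nu$ is the Palm law of the area of the cell of the added point at the origin.

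The second step is to observe that $\nu$ is absolutely continuous with respect to Lebesgue measure. This is \lemref{cont_areas} up to the cosmetic distinction between the stationary (area-size-biased) law $\mu$ of the cell containing a \emph{fixed} point and the Palm law $\nu$ of the cell of an \emph{added} point; both have densities, and in any case one only needs $\nu\ll\text{Leb}$, which follows from the same argument as \lemref{cont_areas} (the area of a cell is a nonconstant smooth function of finitely many nearby Poisson points, and a standard coarea/Fubini argument gives absolute continuity of its law). Since $\nu\ll\text{Leb}$, for every $\delta>0$ there is an $\eta>0$ such that $\nu(I)<\delta$ whenever $|I|<\eta$ — this is just absolute continuity applied to intervals.

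Given this, I would finish as follows. Choose any finite partition $\cA$ of $\R_+$ into intervals $I=[x_i,x_{i+1})$ each of length at most $\eta$, where $\eta$ is chosen (via the previous paragraph with $\delta = \eps/(\pi R^2)$) so that $\nu(I)<\eps/(\pi R^2)$ for every $I\in\cA$; note only finitely many intervals of bounded length are needed since $\nu$ is a probability measure and hence puts mass $<\eps/(\pi R^2)$ on every tail $[T,\infty)$ for $T$ large, so that last interval can be taken to be $[T,\infty)$ with the same bound. Then for each $I\in\cA$,
\[
\P\big(\exists\, v\in V\cap B_R(0) \text{ with } A(v)\in I\big) \le \E\,\#\{v\in V\cap B_R(0): A(v)\in I\} = \pi R^2\,\nu(I) < \eps,
\]
which is exactly the assertion. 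The only mild obstacle is the bookkeeping between $\mu$ and the Palm law $\nu$, and making the Mecke/Campbell computation honest; neither is substantive, and one could equally phrase the bound using $\mu$ and the Voronoi cells intersecting $B_R(0)$ and a covering argument, at the cost of slightly worse constants. Everything else is routine.
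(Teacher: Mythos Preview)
Your argument is correct. The paper gives no proof at all --- it simply declares the statement an ``immediate corollary'' of \lemref{cont_areas} --- so your Mecke/Campbell computation followed by Markov's inequality is precisely the kind of routine unpacking the authors had in mind, and your care in distinguishing the stationary law $\mu$ from the Palm law $\nu$ (and noting that absolute continuity passes between them via the size-bias relation $\mu(da)=a\,\nu(da)$, or by rerunning the proof of \lemref{cont_areas}) is more than the paper bothers with.
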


However, just knowing that the area distribution is continuous is not
enough, since the areas of different cells are not independent. For this
reason we also need.

\begin{lemma} \label{L:distinct_areas}
  Almost surely, all cells have different areas.
\end{lemma}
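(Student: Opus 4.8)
The plan is to prove \lemref{distinct_areas}, the statement that almost surely all Voronoi cells have distinct areas, by a first-moment/union-bound argument combined with the absolute continuity from \lemref{cont_areas}. By countable additivity it suffices to show that for any fixed pair of disjoint (rational) boxes $B, B' \subset \R^2$, almost surely there do not exist $x \in V \cap B$ and $y \in V \cap B'$ with $A(x) = A(y)$; the case where the two cells have centers in a common small box is handled the same way after a further subdivision, and since $V$ is locally finite the collection of relevant pairs of cells is countable. So fix such $B,B'$ and let us bound the probability of a ``bad'' pair.

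The key step is a conditioning argument exploiting the independence properties of the Poisson process on disjoint regions, together with sealing. First I would enclose $B$ in a large square $Q = Q(c,R)$ and $B'$ in a large square $Q' = Q(c',R)$ chosen disjoint, and work on the high-probability event (using \lemref{sealed_high_prob}) that both $Q$ and $Q'$ are $\alpha$-sealed for a suitable $\alpha$ with $Q^{-\alpha} \supset B$ and $(Q')^{-\alpha}\supset B'$. By \lemref{sealed_independent}, on this event the cell of every $x \in V \cap B$ is determined by $V \cap Q^\alpha$, and similarly for $y \in V\cap B'$ via $V\cap (Q')^\alpha$. Enlarging $R$ if necessary so that $Q^\alpha$ and $(Q')^\alpha$ are still disjoint, we get that conditionally on the sealing data the random areas $\{A(x) : x \in V\cap B\}$ and $\{A(y) : y\in V \cap B'\}$ depend on \emph{independent} portions of the Poisson process. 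Now condition additionally on the entire configuration $V \cap (Q')^\alpha$ (hence on all the values $A(y)$, $y \in V\cap B'$, which form a finite set, say $\{a_1,\dots,a_n\}$). It remains to show that for each fixed value $a$, almost surely no $x\in V\cap B$ has $A(x)=a$. Since there are a.s.\ only finitely many such $x$, it is enough to bound $\P(\exists\, x \in V\cap B : A(x) = a)$ for a single deterministic $a$; but one can sum over the at-most-countably-many Poisson points — more cleanly, use the Mecke/Palm formula for the Poisson process to write this probability as $\int_B \P_0\big(A_0 = a\big)\,dx$, where under the Palm measure a point is placed at $x$ and $A_0$ denotes its cell area. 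By translation invariance $\P_0(A_0 = a)$ is the same as the probability that the cell of the origin, under the Palm measure, has area exactly $a$ — which is $0$ by \lemref{cont_areas} (absolute continuity). Hence the integral vanishes, and taking expectations back over the finitely many values $a_1,\dots,a_n$ and then over the sealing data, the bad event for the pair $(B,B')$ has probability $0$ on the sealed event; letting $R\to\infty$ kills the non-sealed contribution.

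Assembling: a countable union over pairs of rational boxes (and refinements for adjacent/overlapping box cases) of probability-zero events is still probability zero, so almost surely all cells have distinct areas.

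The main obstacle I expect is the bookkeeping around using absolute continuity (a statement about the cell at a \emph{fixed} point, namely the origin) to control cells at the \emph{random} Poisson points in $B$: this is exactly what the Mecke equation / Palm calculus is for, and invoking it cleanly is the crux. A reader-friendly alternative that avoids Palm calculus is to discretize — cover $B$ by $\eps$-boxes, note that on the event that a given $\eps$-box contains a Poisson point its cell area has a law absolutely continuous w.r.t.\ Lebesgue (again by the argument behind \lemref{cont_areas}, applied to a point conditioned to lie in a small box), so the probability it equals the fixed value $a$ is $0$; then sum over the finitely many $\eps$-boxes. Either way, the only genuinely substantive input is \lemref{cont_areas}; everything else is the independence-via-sealing packaging already developed in Section~\ref{dep_perc_sec} and a routine countable union.
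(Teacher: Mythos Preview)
Your argument has a genuine gap: the sealing-based independence step only works when the two boxes $B,B'$ are far enough apart that they can be enclosed in \emph{disjoint} neighborhoods $Q^\alpha,(Q')^\alpha$, and this is impossible for nearby (in particular, adjacent) cells. Concretely, to drive the non-sealed probability to zero via \lemref{sealed_high_prob} you must let $\alpha$ (hence $R$) grow without bound, but then the constraint $Q^\alpha\cap (Q')^\alpha=\emptyset$ forces $\|c-c'\|_\infty>2(R+\alpha)\to\infty$, which fails for any fixed pair $B,B'$. Your remark that ``the case where the two cells have centers in a common small box is handled the same way after a further subdivision'' does not help: subdividing makes the boxes \emph{smaller}, not farther apart, and the obstruction is the distance between them. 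So your argument establishes that well-separated cells a.s.\ have distinct areas, but says nothing about adjacent cells --- which is precisely the case that matters for the coloring application.

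The paper's proof avoids this entirely by a local perturbation instead of spatial independence: for the centers $x,y$ of the two cells, it finds (using planarity of the Delaunay triangulation) a third center $z$ adjacent to $x$ but not to $y$; conditioning on everything except the radial coordinate of $z$ relative to $x$, the area $A(x)$ varies strictly monotonically while $A(y)$ is fixed, so $\P(A(x)=A(y))=0$. This is exactly the mechanism behind \lemref{cont_areas}, reused with a more careful choice of which point to wiggle. Your invocation of \lemref{cont_areas} through Palm/Mecke gives only the \emph{marginal} absolute continuity of $A(x)$; what is needed for nearby cells is absolute continuity of $A(x)$ \emph{conditional} on the data determining $A(y)$, and that is what the choice of $z$ delivers.
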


These two lemmas are intuitively obvious, though writing a precise proof is
delicate. It is possible to get a somewhat simpler proof by replacing the
area of a cell by some other quantity. For example, the distance to the
nearest neighbor does not work since some centers have the same distance.
However, total distance to the neighbors in the Delaunay graph does work.

\begin{proof}[Proof of \lemref{cont_areas}]
  The idea of the proof is this: let $x$ be the center of the cell of the
  origin and let $y$ be the center of an adjacent cell. Conditioned on the
  location of all centers other than $y$, and on the direction of the
  vector $y-x$, we get that the area of $x$ is a differentiable function of
  $r=\|y-x\|$, the distance between $x$ and $y$, with positive derivative.
  Thus, $\mu$ conditioned on this $\sigma$-algebra is absolutely continuous
  w.r.t.\ Lebesgue and so $\mu$ itself must also be so.

  To make this precise, we partition $\R^d$ into cubes of size $\eps^d$
  centered around $\eps \Z^d$. We condition on the number of points of the
  Poisson process in each of these cubes. We then use finer and finer
  partitions (say, with $\eps_i=2^{-i}$) until we reach a partition which
  already reveals in what cube lies the center of the cell of the origin
  (i.e.\ $x$) as well as its nearest neighbor (i.e.\ $y$). We then continue
  according to the previous paragraph: we condition on the exact location
  of all points of the Poisson process except $y$ and on the direction of
  $y-x$. After that we get that $A(x)$ is now a monotone function of
  $r=\|y-x\|$ and its derivative is equal to the length of the intersection
  of the cells of $x$ and $y$, which is strictly positive. Since under this
  conditioning, the distribution of $r$ is absolutely continuous w.r.t.\
  Lebesgue measure on some interval we get that the conditioned $\mu$ is
  also absolutely continuous w.r.t.\ Lebesgue and so is $\mu$ itself.
\end{proof}

\begin{proof}[Proof of \lemref{distinct_areas}]
  The proof is similar to that of \lemref{cont_areas}. Fixing any two
  points, $a$ and $b$ we wish to show that the probability that they belong
  to different cells with equal areas is zero. To that end, we find the two
  centers of the cells, $x$ and $y$ and find a third cell, centered at $z$,
  which is adjacent to one of these cells, say, $x$, but not to the other.
  (Such $z$ exists for any $x,y$ in any planar triangulation with no
  unbounded face.) Now $A(x)$ depends on the exact location of $z$, as in
  the proof of \lemref{cont_areas}, but $A(y)$ does not. Of course, all
  this needs to be done using fine partitions, etc.

  The lemma now follows by considering all possible values for $a$ and $b$ with
  rational coordinates.
\end{proof}

Note that the proof of \lemref{distinct_areas} above does not apply as
is to higher dimensions, since in such dimensions, there are configurations with two
distinct cells having the same neighbors. Of course,
\lemref{distinct_areas} itself remain valid.

We now prove Proposition~\ref{area_prop}. The key idea is that cells with
areas in any sufficiently small interval are dominated by sub-critical
percolation.

\begin{proof}[Proof of Proposition~\ref{area_prop}]
  We show that there is some sufficiently refined partition $\cA$ of
  $\R_+$, such that a.s.\ for any $I \in \cA$ there is no infinite path in
  $G$ with all areas in $I$. The proposition will follow since an infinite path
  with decreasing areas will have all areas in the same interval of $\cA$
  from some point on.

  For some $R$ to be determined later, consider the lattice $L=(2R\Z)^2$.
  For an interval $I$, if there is an infinite path of cells with areas in
  $I$, then there is an infinite path $\{x_i\}$ in $L$ so that every
  $Q(x_i,R)$ intersects such a cell. The probability that a square
  intersects a cell with area in $I$ can be made arbitrarily small, but
  these events are not independent. To overcome this we use sealed boxes.

  For an interval $I$, call a point $x\in L$ open if either $Q(x,R)$
  intersects a cell with area in $I$, or if either one of $Q(x,R+\alpha)$
  or $Q(x,R+3\alpha)$ is not $\alpha$-sealed. If there is an infinite path
  in $G$ with areas in $I$ then there is also an infinite open path in $L$.

  The event that the squares are sealed depends only on the Poisson process
  within $Q(x,R+4\alpha)$. We claim that on the event that they are sealed,
  the areas of cells intersecting $Q(x,R)$ also depend only on the process
  in $Q(x,R+4\alpha)$. Taking $\alpha=R/8$ it follows that the process of
  open boxes is $2$-dependent. To see this claim, note that the center of
  any cell intersecting $Q(x,R)$ must be within $Q(x,R+2\alpha)$. The
  second seal implies that the cell of this center is contained in
  $Q(x,R+4\alpha)$ and determined by the process in this box.

  To complete the proof, take some $\eps>0$ so that a $2$-dependent
  percolation with marginal $\eps$ is sub-critical (using
  Lemma~\ref{lem:k_dependent_percolation}). Using
  Lemma~\ref{L:sealed_high_prob}, fix $R$ large enough so that with
  $\alpha=R/8$,
  \[
  \P(Q(x,R+i\alpha) \text{ is not $\alpha$-sealed}) < \eps/3
  \qquad \text{for $i=1,3$.}
  \]
  Next, using Corollary~\ref{cor:partition_areas} take a partition $\cA$
  fine enough that for any $I\in\cA$, the probability that there exists
  $v\in V\cap Q(x,R+2\alpha)$ with area in $I$ is at most $\eps/3$. Then
  for each $I\in\cA$, the probability that any fixed $x$ is open is at most
  $\eps$ and so the process of open points does not contain an infinite
  open path.
\end{proof}

\subsection{Deleting low degree vertices --- Proposition~\ref{removal_prop}}
\label{removal_sec}

In this section we prove Proposition~\ref{removal_prop}. Throughout the
section $R>0$ is a parameter, assumed large enough as needed for the
calculations which follow. We also define the square annuli $A(x,r,R) :=
Q(x,R) \setminus Q(x,r)$.

We now introduce our main object of study in this section:

\begin{defn}
  Inductively, let $G^R_{0}:=G$ and let $G^R_{n+1}$ denote the graph obtained
  from $G^R_{n}$ by deleting all vertices in $Q(0,3R)$ with
  $G^R_{n}$-degree at most $5$. Let $G^R_\infty:=\cap_{n=0}^\infty G^R_n$.
\end{defn}

Thus we iteratively delete vertices of degree at most 5, but only those
vertices contained in a fixed large square. We aim to prove the following

\begin{lemma}\label{L:finite_radius_6core}
  We have $\P\big(G^R_\infty \cap Q(0,R) \neq \emptyset \big)
  \xrightarrow[R\to\infty]{} 0$.
\end{lemma}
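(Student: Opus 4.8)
The plan is to reduce \lemref{finite_radius_6core} to the non-existence of a certain ``crossing cluster'' of surviving vertices across the square annulus $A(0,R,3R)$, and then to rule out that crossing by combining the rare-squares estimate (\lemref{rare_squares}) with the $k$-dependent percolation bound of Lemma~\ref{lem:k_dependent_percolation}, using several local applications of Euler's formula along the way.

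First I would make the topological reduction. Suppose $G^R_\infty\cap Q(0,R)\neq\emptyset$, pick a surviving vertex $v\in Q(0,R)$, and let $C$ be its connected component in $G^R_\infty$. Every vertex of $C$ lying in $Q(0,3R)$ has $G^R_\infty$-degree at least $6$, since it was never deleted. If $C$ were contained in $Q(0,3R)$ it would be a finite planar graph of minimum degree at least $6$, contradicting the Euler-formula fact that a finite planar graph has a vertex of degree at most $5$. Hence $C$ contains a vertex outside $Q(0,3R)$, so $C$ contains a path from $v$ to the complement of $Q(0,3R)$; this path crosses the annulus $A(0,R,3R)$ (of width $2R$), and all of its vertices inside $Q(0,3R)$ have $G^R_\infty$-degree at least $6$. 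Thus it suffices to bound the probability that such a crossing path of survivors exists.

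Next I would control atypical geometry. Tile $\R^2$ by unit squares indexed by $\Z^2$, and call a square \emph{bad} if the Poisson configuration in a fixed bounded window around it is exceptional --- it contains more than a fixed number of points, or some Delaunay edge incident to a point of the square has length exceeding a fixed constant $\ell$ --- and \emph{good} otherwise. By \lemref{rare_squares} the probability that a given square is bad can be made arbitrarily small by choosing the constants large, and badness depends only on the process in a bounded neighborhood, so the bad field is finite-range dependent; a quantitative version of the proof of Lemma~\ref{lem:k_dependent_percolation} then gives that the probability that $Q(0,3R)$ contains a $*$-connected cluster of bad squares of diameter at least $\delta R$ is at most $CR^2e^{-c\delta R}\to 0$. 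Away from bad clusters, all Delaunay edges are short and local structure is bounded. The core step then is to truncate: since $G^R_\infty\subseteq G^R_{M_0}$ for every $M_0$, the crossing above is in particular a crossing by vertices surviving the first $M_0$ removal rounds, and on good squares (away from large bad clusters) whether a vertex survives $M_0$ rounds is a function of the process in a bounded Euclidean window, so the indicator that a good square contains an $M_0$-round survivor is a finite-range dependent field; the sealing device of Section~\ref{dep_perc_sec} is the natural tool to make this precise and to bound the dependency range $k$. One then declares a unit square \emph{occupied} if it is bad or is a good square containing an $M_0$-round survivor, observes that the occupied crossing of $A(0,R,3R)$ forced by the reduction has probability at most $CR^2$ times the probability of a single occupied crossing path, and invokes Lemma~\ref{lem:k_dependent_percolation}: provided the occupied marginal is below $p_0(k)$, the expected number of such crossing paths tends to $0$ as $R\to\infty$, and combining with the bad-cluster estimate finishes the proof.

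I expect the main obstacle to be precisely the input needed for that last step: showing that for \emph{some finite} $M_0$ the probability that a good square still contains an $M_0$-round survivor is below the percolation threshold $p_0(k)$ --- a finite-round, quantitative version of Proposition~\ref{removal_prop} that cannot invoke Proposition~\ref{removal_prop} itself. This is where the ``several uses of Euler's formula'' enter: one must analyze, by a local counting/discharging argument on a typical bounded window, how $M_0$ iterations of degree-$\le 5$ removal thin out the would-be survivors, and show the surviving density drops below threshold after boundedly many rounds. A secondary technical point is the bookkeeping that makes $M_0$-round survivorship genuinely local on good squares and pins down the resulting dependency range $k$, for which the sealed-square machinery of Section~\ref{dep_perc_sec} should suffice.
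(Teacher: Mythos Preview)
Your crossing reduction is correct, but the proposal has a real gap: the percolation wrapper does no work, because the ``main obstacle'' you identify---that for some $M_0$ a fixed square contains an $M_0$-round survivor with probability below the threshold $p_0(k)$---is essentially the lemma itself. Indeed, the $k$-dependent percolation step you describe is exactly how the paper later deduces Proposition~\ref{removal_prop} from this lemma (via Corollary~\ref{C:finite_radius_6core}); you are running that implication backwards and then leaving its hypothesis unproved. Your gesture toward ``a local counting/discharging argument on a typical bounded window'' is not a strategy but a restatement of what must be shown, and the dependency range $k$ grows with $M_0$, so you would even need the survivor probability to beat $p_0(k(M_0))$, not merely tend to zero. (A side remark: \lemref{rare_squares} does not bound long edges or point density---those come from \lemref{no_long_edges} and Poisson tails. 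A square is \emph{rare} when it lacks a low-degree vertex outside every Delaunay triangle; the role of that lemma is quite different from the ``bad geometry'' control you assign it.)

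The paper's proof involves no percolation; it is a direct size contradiction at scale $R$. Set $L=\log R$ and $r=R^{1/3}$. If $G^R_\infty$ meets $Q(0,R)$ and there are no edges longer than $L$, then \lemref{large_6core} (Euler's formula summed over concentric annuli, using minimum degree $6$) forces at least $cR^2/L^2$ surviving vertices in $Q(0,3R)$. For an upper bound, tile $Q(0,3R)$ by boxes of side $r$; with high probability every box is \emph{typical} in the sense of \lemref{rare_squares}, so each box containing a survivor also contains a deleted degree-$\le 5$ vertex lying in no Delaunay triangle, which forces an edge of a non-triangular face of $G^R_\infty$ through that box. Such faces contribute to $ME$, and \lemref{LD_large} gives $LD\ge\tfrac25 ME$; since every low-degree vertex of $G^R_\infty$ restricted to $Q(0,3R+L)$ lies in the boundary annulus $A(0,3R,3R+L)$ of area $O(RL)$, at most $O(RL)$ boxes carry survivors, hence at most $O(RL\cdot r^2)$ survivors in total (Lemma~\ref{many_boundary_verts_lem}). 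The inequality $R^2/L^2 \le C\,RL\,r^2$ fails for large $R$, and that is the contradiction. These two applications of Euler's formula---one for the lower bound, one for the upper---are exactly the content your proposal defers.
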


\begin{cor}\label{C:finite_radius_6core}
  For any $\eps>0$, there are $R,M$ so that
  $\P\big(G^R_M \cap Q(0,R) \neq \emptyset \big) < \eps$.
\end{cor}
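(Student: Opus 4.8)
The plan is to deduce the corollary from \lemref{finite_radius_6core} by a routine continuity-of-measure argument, the only point needing care being the passage from the infinite intersection $G^R_\infty$ to a finite stage $M$. First I would use \lemref{finite_radius_6core} to fix $R$ large enough that $\P\big(G^R_\infty \cap Q(0,R) \neq \emptyset\big) < \eps/2$. With this $R$ now fixed, it remains to show that $\P\big(G^R_M \cap Q(0,R) \neq \emptyset\big) \to \P\big(G^R_\infty \cap Q(0,R) \neq \emptyset\big)$ as $M \to \infty$, and then to choose $M$ so that the left-hand side drops below $\eps$.

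For the convergence, note that the graphs $G^R_n$ are nested, $G^R_{n+1} \subseteq G^R_n$, since the construction only ever deletes vertices; hence the vertex sets $G^R_n \cap Q(0,R)$ form a nonincreasing sequence whose intersection over $n$ is exactly $G^R_\infty \cap Q(0,R)$. The crucial observation is that almost surely $V \cap Q(0,R)$ is finite, as the Poisson process is locally finite; therefore $G^R_0 \cap Q(0,R) = V \cap Q(0,R)$ is a finite set, and a nonincreasing sequence of subsets of a finite set is eventually constant. Consequently, on the full-measure event that $V \cap Q(0,R)$ is finite, $G^R_\infty \cap Q(0,R) = \emptyset$ if and only if $G^R_M \cap Q(0,R) = \emptyset$ for some (realization-dependent) finite $M$. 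Writing $E_M := \{G^R_M \cap Q(0,R) \neq \emptyset\}$, the events $E_M$ are thus decreasing in $M$ and $\bigcap_M E_M$ agrees, up to a null set, with $\{G^R_\infty \cap Q(0,R) \neq \emptyset\}$.

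Continuity of probability from above then gives $\P(E_M) \downarrow \P\big(G^R_\infty \cap Q(0,R) \neq \emptyset\big) < \eps/2$, so choosing $M$ large enough yields $\P(E_M) < \eps$, which is the claim. There is no substantial obstacle: the only subtlety is that a decreasing sequence of nonempty sets may in general have empty intersection, which is precisely why one invokes local finiteness of the Poisson process to reduce to a finite ambient vertex set; after that the argument is immediate.
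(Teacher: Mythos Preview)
Your proof is correct and follows essentially the same approach as the paper: fix $R$ via \lemref{finite_radius_6core}, then use that the events $E_M$ decrease to $\{G^R_\infty \cap Q(0,R) \neq \emptyset\}$ and apply continuity of measure from above. The paper is simply more terse---it asserts the set equality $\{G^R_\infty \cap Q(0,R) \neq \emptyset\} = \bigcap_M E_M$ directly and uses $\eps$ rather than $\eps/2$---whereas you spell out the justification via local finiteness (which in fact holds surely, not merely almost surely, since $\M$ consists of locally finite sets by definition).
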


\begin{proof}
  Pick $R$ such that   $\P\big(G^R_\infty \cap Q(0,R) \neq \emptyset \big)
  < \eps$. Since
  \[
  \Big\{G^R_\infty \cap Q(0,R) \neq \emptyset\Big\} = \bigcap_M
  \Big\{G^R_M \cap Q(0,R) \neq \emptyset\Big\},
  \]
  the bound will hold for that $R$
  and sufficiently large $M$.
\end{proof}

Before embarking on the proof of \lemref{finite_radius_6core}, let us
explain how one can get a similar and simpler result when deleting vertices
of degree at most 6 (thus yielding a deterministic $7$-coloring). Suppose
that $G^R_\infty$ contains a vertex in $Q(0,R)$. By \lemref{no_long_edges}
$G^R_\infty$ is unlikely to contain edges longer then $\log R$ within
$Q(0,3R)$. All vertices of $G^R_\infty$ in $Q(0,3R)$ have degree at least
$7$. It is an easy consequence of Euler's formula that a planar graph with
minimal degree $7$ has positive expansion (the boundary of any set is
proportional to its size). This implies (in the absence of long edges) that
the number of vertices of $G^R_\infty$ in $Q(0,3R)$ is exponential in $R$.
Of course, this too is unlikely. When deleting vertices of degree at most
5, the remaining graph has minimal degree 6, which is not as obviously
unlikely. However, this can only happen if $G$ contains a large segment of
a triangular lattice, which we rule out below.

We begin with two combinatorial lemmas on planar maps. For any finite graph
$H$, let $LD=LD(H)$ be the number of vertices of low-degree, namely at most
5. For a finite simple planar map $H$, let $ME=ME(H)$ be the number of
``missing edges'': the number of edges that can be added to the map while
keeping it planar and simple. A face of size $k$ can be triangulated using
$k-3$ edges, after which no further edges can be added, and so $ME = \sum_f
(\deg[f]-3)$ (where the sum also includes the external face and where we
assume $|H|\ge 3$ so that $\deg[f]\ge 3$ for all faces).

\begin{lemma}\label{L:LD_large}
  For any finite, connected and simple planar map $H$ with $|V[H]|\ge 3$
  we have $LD \ge \frac25 ME + \frac{12}{5}$.
\end{lemma}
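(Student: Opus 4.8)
The plan is to prove the inequality $LD \ge \frac{2}{5}ME + \frac{12}{5}$ by a straightforward double-counting argument built on Euler's formula, combined with the characterization $ME = \sum_f(\deg[f]-3)$ given just above the statement. Let $H$ be a finite connected simple planar map with $n = |V[H]| \ge 3$ vertices, $e$ edges and $f$ faces (including the external face), so Euler's formula gives $n - e + f = 2$. The two basic sums I would write down are the handshake identity $\sum_{v} \deg[v] = 2e$ and the face-degree identity $\sum_{\text{faces}} \deg[f] = 2e$ (the latter requires $n\ge 3$ and connectivity so that every edge borders exactly two faces and every face has degree at least $3$, which is exactly the hypothesis in the statement). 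From the face identity, $ME = \sum_f(\deg[f]-3) = 2e - 3f$, and substituting $f = 2 - n + e$ gives the clean formula $ME = 2e - 3(2-n+e) = 3n - e - 6$.

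**The main counting step.**

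Now I would bound $LD$ from below using the vertex-degree identity. Write $LD$ for the number of vertices of degree at most $5$ and $HD = n - LD$ for the number of vertices of degree at least $6$. Since every vertex has degree at least... well, we cannot assume a lower bound better than $1$ in general, but the useful inequality goes the other way: $2e = \sum_v \deg[v] \ge 6\, HD + 0\cdot LD$ is too weak, so instead I would use $\sum_v \deg[v] \ge 6(n - LD) + 1\cdot LD$? That is still not quite what is needed — the right move is to note that vertices counted in $LD$ contribute at least... Let me reconsider: the cleanest route is $2e = \sum_v \deg[v] \ge 6(n - LD) + \sum_{v \in LD}\deg[v] \ge 6(n-LD)$, giving $e \ge 3(n - LD)$, i.e. $LD \ge n - e/3$. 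Combining with $e = 3n - 6 - ME$ from the previous step yields $LD \ge n - (3n - 6 - ME)/3 = \frac{ME}{3} + 2$. This is close but gives the coefficient $\frac13$ rather than $\frac25$ and the constant $2$ rather than $\frac{12}{5}$, so it is slightly too weak — I will need to be more careful about the contribution of the low-degree vertices (they contribute $\ge 3\,LD$ to $\sum_v\deg v$, not $0$, since $H$ is connected with $n\ge 3$ so every vertex has degree $\ge 2$, and in a simple planar triangulation-completable map one can often push this to $3$). Using $\sum_v \deg[v] \ge 6(n - LD) + 3\,LD = 6n - 3\,LD$... that gives $e \ge 3n - \tfrac32 LD$, hence $LD \ge \tfrac23(3n - e) = \tfrac23(ME + 6) = \tfrac23 ME + 4$, which now has a coefficient that is too large. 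The truthful coefficient $\frac{2}{5}$ signals that the low-degree vertices should be credited with degree exactly... the extremal case: I expect equality is attained by configurations where every low-degree vertex has degree exactly $5$ and the map is a triangulation (so $ME = 0$ forces $LD \ge \frac{12}{5}$, i.e. $LD \ge 3$, consistent with the fact that a triangulation on $n \ge 3$ vertices has at least $3$ vertices of degree $\le 5$ — actually at least $4$ by a sharper count, but $3$ suffices here). So the correct bookkeeping is: $2e = \sum_v \deg[v] = \sum_{v\in LD}\deg[v] + \sum_{v\notin LD}\deg[v] \ge (\text{something}) + 6\,HD$, and I want to bound $\sum_{v\in LD}\deg[v]$ from \emph{below} by a quantity that, after eliminating $e$ and $HD$, lands on the stated inequality; tracking constants, one finds the needed input is that the average degree of low-degree vertices, weighted correctly against Euler, produces exactly $\frac25$.

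**The right assembly and the main obstacle.**

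The honest way to get the exact constants is: from $2e = 6n - 6\,LD + \sum_{v\in LD}\deg[v] - 12 + 2\,ME$... no. Let me commit to the approach I am confident closes it: combine $6n - 2e = \sum_v(6 - \deg[v]) = \sum_{v\in LD}(6-\deg[v]) - \sum_{v\notin LD}(\deg[v]-6) \le \sum_{v\in LD}(6-\deg[v]) \le 5\,LD$ (since each low-degree vertex contributes $6-\deg[v] \le 5$, using $\deg[v]\ge 1$; if we knew $\deg[v]\ge 3$ we would get $\le 3\,LD$ and a different, stronger statement — but the paper's constant $\frac25$ corresponds precisely to the $5\,LD$ bound, since $6-1=5$). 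Then $6n - 2e \le 5\,LD$. Now Euler via $ME = 3n - e - 6$ gives $2e = 6n - 2\,ME - 12$, so $6n - 2e = 2\,ME + 12 \le 5\,LD$, i.e. $LD \ge \frac{2}{5}ME + \frac{12}{5}$, which is exactly the claim. So the whole proof is: (i) face-degree sum $\Rightarrow ME = 2e - 3f$; (ii) Euler $\Rightarrow ME = 3n - e - 6$, equivalently $6n - 2e = 2ME + 12$; (iii) vertex-degree sum with the trivial bound $6 - \deg[v] \le 5$ on low-degree vertices and $6 - \deg[v] \le 0$ on high-degree vertices $\Rightarrow 6n - 2e \le 5\,LD$; (iv) chain the two to conclude. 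The main thing to be careful about is the validity of the face-degree identity and the bound $\deg[f]\ge 3$ for \emph{all} faces including the external one — this is where simplicity and $|V[H]|\ge 3$ and connectivity enter, and it is worth stating explicitly that a simple connected planar map on $\ge 3$ vertices has no face of degree $1$ or $2$; degree-$1$ faces would require a loop, degree-$2$ faces a multi-edge, both excluded by simplicity, and the external face of a connected graph on $\ge 3$ vertices has degree $\ge 3$ as well (a single edge or a path on $2$ vertices being the only exceptions). No real obstacle remains beyond this routine case-check; the inequality is tight and the argument is essentially forced.
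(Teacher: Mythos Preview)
Your final assembled argument is correct and is essentially the paper's own proof: both establish the identity $\sum_v (6-\deg[v]) = 2\,ME + 12$ and then bound the left side by $5\,LD$ using $6-\deg[v]\le 5$ for low-degree vertices and $\le 0$ otherwise. The only cosmetic difference is that the paper obtains the identity by first triangulating (so that $\sum_v(6-d'_v)=12$) and then removing the $ME$ added edges, whereas you derive it directly from $ME=2e-3f$ together with Euler's formula; these are two phrasings of the same computation.
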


\begin{proof}
  Add $ME$ edges to make the map into a triangulation. Let $d'_v$ be the
  resulting vertex degrees, than we have $\sum_v (6-d'_v) = 12$ (using
  Euler's formula combined with the triangulation property $3F=2E=\sum
  d'_v$), and therefore $\sum_v (6-d_v) = 12 + 2 ME$. The claim follows
  since low-degree vertices contribute at most 5 to this sum, and high
  degree vertices at most 0, so that $\sum_v (6-d_v) \le 5 L D$.
\end{proof}

\begin{lemma}\label{L:large_6core}
  Fix $\rho>\ell>0$. Let $H$ be a simple planar graph embedded in $\R^2$
  satisfying the following:
  \begin{enumerate}
  \item All vertices in $Q(0,3\rho)$ have degree at least $6$.
  \item All edges of $H$ with an endpoint in $Q(0,3\rho)$ have length at
    most $\ell$.
  \item There exists a vertex of $H$ in $Q(0,\rho)$.
  \end{enumerate}
  Then $H$ has at least $\frac{8\rho^2}{5\ell^2}$ vertices in $Q(0,3\rho)$.
\end{lemma}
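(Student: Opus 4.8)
The plan is to combine Euler's formula, in the packaged form of \lemref{LD_large}, with a geometric area count: near‑triangulations whose edges are short and whose faces cover a region of area $\asymp\rho^2$ must have $\asymp\rho^2/\ell^2$ faces, hence that many vertices.

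First I would reduce to the case that $H$ is connected, by replacing $H$ with the connected component of a vertex $v_0$ guaranteed by hypothesis~(3) to lie in $Q(0,\rho)$; this preserves all three hypotheses and can only decrease $|V(H)\cap Q(0,3\rho)|$. Let $C$ be the connected component of $v_0$ in the induced subgraph $H[V(H)\cap Q(0,3\rho)]$. Since every edge of $H$ meeting $Q(0,3\rho)$ has length at most $\ell$, every edge of $C$ has length at most $\ell$; moreover any vertex of $C$ lying in the slightly smaller square $Q(0,3\rho-\ell)$ keeps all of its $H$-neighbours inside $Q(0,3\rho)$, hence inside $C$, so its $C$-degree equals its $H$-degree, which is $\ge 6$ by hypothesis~(1). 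Consequently every vertex of $C$ of degree at most $5$ lies in the thin square annulus $A(0,3\rho-\ell,3\rho)$; write $b$ for the number of such vertices, and $n:=|V(H)\cap Q(0,3\rho)|\ge |V(C)|$. If $b\ge \tfrac{8\rho^2}{5\ell^2}$ we are done, so from now on assume $b<\tfrac{8\rho^2}{5\ell^2}$.

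Now apply \lemref{LD_large} to $C$ (connected, simple, planar, and with $\ge 3$ vertices since $v_0$ already has $\ge 6$ neighbours): the number $LD(C)$ of low-degree vertices satisfies $LD(C)\ge \tfrac25 ME(C)+\tfrac{12}{5}$, and since all of them lie in the annulus, $ME(C)\le \tfrac52 b-6$. Thus $C$ is almost a triangulation: all but a controlled number of its bounded faces are triangles, and each such triangular face has area at most $\tfrac{\sqrt3}{4}\ell^2$ because its three sides have length $\le\ell$. By Euler's formula $C$ has at most $2|V(C)|-4$ faces in all, so $|V(C)|$ exceeds half the number of bounded triangular faces; it therefore suffices to produce $\tfrac{16\rho^2}{5\ell^2}$ such faces. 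Granting for the moment that the closures of the bounded faces of $C$ cover $Q(0,\rho)$ up to the (measure-zero) $1$-skeleton together with a set of area $o(\rho^2)$, we get $4\rho^2=\mathrm{area}\,Q(0,\rho)\le \tfrac{\sqrt3}{4}\ell^2\cdot\#\{\text{bounded triangular faces}\}+o(\rho^2)$, whence $\#\{\text{bounded triangular faces}\}\ge \tfrac{16\rho^2}{5\ell^2}$ with room to spare, and $n\ge|V(C)|\ge\tfrac{8\rho^2}{5\ell^2}$.

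The geometric covering statement is the heart of the matter and the step I expect to cost the most work. One must show (i) that no point of $Q(0,\rho)$ lies in the unbounded face of $C$, and (ii) that no single bounded face of $C$ swallows a positive fraction of $Q(0,\rho)$, and that the total area of the non-triangular bounded faces is $o(\rho^2)$. For (i) the idea is that $v_0$ is a degree-$\ge 6$ vertex sitting deep inside $Q(0,3\rho-\ell)$, all of whose neighbours and second neighbours remain inside $Q(0,3\rho)$ and hence in $C$, while at the same time $C$ must reach $\partial Q(0,3\rho)$ — any connected piece of $H$ through $v_0$ staying strictly inside $Q(0,3\rho)$ would be a finite planar graph with low-degree boundary vertices inside $Q(0,3\rho)$, contradicting hypothesis~(1) — so that $C$ separates $Q(0,\rho)$ from infinity. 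For (ii) one uses that a bounded face whose boundary walk has $d$ edges (all of length $\le\ell$) has diameter at most $d\ell/2$, together with $d-3\le ME(C)$ coming from $ME=\sum_f(\deg[f]-3)$. I expect this last point to be the delicate one: to turn the bound on $ME(C)$ into genuine control of the large faces one really wants, in addition to the stated hypotheses, that the graph does not thread thinly through the box — information that in the intended application follows from the absence of long edges (\lemref{no_long_edges}) and the structure of the deleted-core graph, and which is where the bulk of the argument should be spent. Everything else is Euler-formula bookkeeping.
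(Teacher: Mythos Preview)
Your area-counting approach is genuinely different from the paper's, but the covering step you single out as ``the heart of the matter'' is not merely delicate---it does not go through with the tools you have. Two concrete gaps:

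\textbf{(i) Separation.} From ``$C$ contains $v_0\in Q(0,\rho)$ and reaches $A(0,3\rho-\ell,3\rho)$'' you cannot conclude that $C$ separates $Q(0,\rho)$ from infinity. A connected planar graph containing a point and reaching near the boundary of a box can perfectly well leave most of the box in its unbounded face; nothing in the hypotheses rules out, a priori, the unbounded face of $C$ swallowing a fixed fraction of $Q(0,\rho)$. The degree-$\ge 6$ constraint does fight against this, but turning that intuition into a proof is precisely the content of the lemma.

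\textbf{(ii) Large bounded faces.} You use \lemref{LD_large} in the direction $ME(C)\le\tfrac52 b-6$, hoping this makes $C$ ``almost a triangulation''. But under your standing assumption $b<\tfrac{8\rho^2}{5\ell^2}$ this only gives $ME(C)\lesssim \rho^2/\ell^2$, so a single bounded face may have degree $d\sim\rho^2/\ell^2$ and diameter $\sim d\ell$, with area far exceeding $\rho^2$. Even in the favorable regime $b\asymp\rho/\ell$, one bounded face of degree $\asymp\rho/\ell$ already has area $\asymp\rho^2$. So the ``$o(\rho^2)$'' you need does not follow from an $ME$ bound alone.

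The paper avoids both issues by a layered count that runs \lemref{LD_large} in the opposite direction. For each $t\in[\rho,3\rho]$ it takes $H_t$, the component of $v$ in $H$ restricted to $Q(0,t)$, observes that the external face of $H_t$ surrounds $v$ and reaches past $Q(0,t-\ell)$, hence has degree at least $2(t-\rho-\ell)/\ell$; this gives a \emph{lower} bound $ME(H_t)\ge 2(t-\rho)/\ell-5$, and then \lemref{LD_large} yields $LD(H_t)\ge\tfrac45(t-\rho)/\ell+\tfrac25$ vertices in the annulus $A(0,t-\ell,t)$. Summing over the $\lfloor 2\rho/\ell\rfloor$ disjoint annuli of width $\ell$ gives the quadratic bound $\tfrac{8\rho^2}{5\ell^2}$ directly---no covering statement, no face-area estimates. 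The key idea you are missing is to exploit the \emph{large external face} (forcing many missing edges, hence many low-degree boundary vertices) at every scale $t$, rather than trying to control the internal faces at the single scale $t=3\rho$.
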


Note that the order of magnitude $(\rho/\ell)^2$ is achieved by a
triangular lattice with edge length $\ell$.

\begin{proof}
  We assume that $H$ has only finitely many vertices in $Q(0,3\rho)$ since
  otherwise the conclusion is trivial. Fix a vertex $v\in Q(0,\rho)$. For
  $t\in[\rho,3\rho]$, let $H'_t$ be the
  sub-graph induced by vertices inside $Q(0,t)$, and let $H_t$ be the
  connected component of $v$ in $H'_t$.

  Note that the connected component of $v$ in $H$ is not contained in
  $Q(0,3\rho)$ since otherwise it would be a finite, connected and simple
  planar map with all degrees at least 6 which is impossible by
  Lemma~\ref{L:LD_large}. By our assumptions, all vertices of $H_t$ with
  neighbors in $H\setminus H_t$ (which includes all vertices of degree at
  most 5 in $H_t$) must be in the annulus $A(0,t-\ell,t)$. It follows that
  the external face of $H_t$ surrounds $v$ and exits $Q(0,t-\ell)$ and so
  has degree at least $\frac{2(t-\rho-\ell)}{\ell}$. Thus
  \[
  ME(H_t) \ge \frac{2(t-\rho-\ell)}{\ell} - 3 = \frac{2(t-\rho)}{\ell} - 5.
  \]

  By \lemref{LD_large}, the number of vertices in $A(0,t-\ell,t)$ is at
  least $\frac25 ME(H_t) + \frac{12}{5} \ge \frac45 \frac{t-\rho}{\ell} +
  \frac25$. Let $M = \lfloor 2\rho/\ell\rfloor$. Splitting
  $A(0,\rho,3\rho)$ into annuli $A(0,\rho+(k-1)\ell,\rho+k\ell)$ for
  $k=1,\dots,M$ one finds that the number of vertices of $H$ in
  $Q(0,3\rho)$ is at least
  \[
  1 + \sum_{k=1}^M \left(\frac45 k + \frac25\right) = \frac{2(M+1)^2+3}{5}
  \ge \frac{2(2\rho/\ell)^2}{5}.
  \qedhere
  \]
\end{proof}

Next, a simple lemma showing that long edges in $G$ are unlikely.

\begin{lemma}[No long edges]\label{L:no_long_edges}
  The probability of having an edge of length at least $\ell$ in $E[G]$
  which intersects the square $Q(0,\rho)$ is at most
  \[
  \left( \frac{\sqrt{32}\rho}{\ell}+8 \right)^2 e^{-\ell^2/32}.
  \]
\end{lemma}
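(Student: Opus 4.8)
The plan is to bound the probability of a long Delaunay edge intersecting $Q(0,\rho)$ by a union bound over a net of centers of small balls, using the standard geometric characterization of Delaunay edges in terms of empty disks. Recall that $\{x,y\}$ is a Delaunay edge of $V$ exactly when there is an open disk with $x,y$ on its boundary and no points of $V$ in its interior; in particular, if $\{x,y\}\in E[G]$ then there is an empty disk of radius $\ge \tfrac12 d(x,y) = \tfrac{\ell}{2}$ whose boundary passes through both $x$ and $y$. So I will first argue: if there is an edge of length at least $\ell$ meeting $Q(0,\rho)$, then there is a point $z\in\R^2$ with $d(z,Q(0,\rho)) \le \ell$ (equivalently $z$ lies in a slightly enlarged square) such that the ball $B(z,\ell/2)$ contains no point of $V$ in its interior. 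Indeed, take such an edge $\{x,y\}$; it has a point in $Q(0,\rho)$ and length $\ge\ell$, so one of its endpoints, and hence the center of the empty disk witnessing the edge, is within distance $\ell$ of $Q(0,\rho)$; that disk has radius $\ge\ell/2$ so it contains an empty ball of radius $\ell/2$ centered nearby (one can always shrink to radius exactly $\ell/2$ while keeping it empty and keeping the center within distance $\ell$ of $Q(0,\rho)$).

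Next I discretize. Cover the enlarged region $Q(0,\rho+\ell)$ (which contains all admissible centers $z$) by a grid of points spaced $\tfrac{\ell}{\sqrt{32}}$ apart in each coordinate, so that every point of the region is within Euclidean distance $\tfrac{\ell}{4}$ of a grid point $w$. The number of grid points needed is at most $\left(\tfrac{\sqrt{32}(\rho+\ell)}{\ell}+1\right)^2$, which one bounds by $\left(\tfrac{\sqrt{32}\rho}{\ell}+8\right)^2$ after absorbing the $\ell$-terms (here I would do the routine arithmetic: $\sqrt{32}\approx5.66$, so $\sqrt{32}+1\le 8$). If $B(z,\ell/2)$ is empty of points of $V$ and $|z-w|\le \ell/4$, then $B(w,\ell/4)$ is also empty of points of $V$. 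Hence the event in question is contained in the union, over grid points $w$, of the events $\{V\cap B(w,\ell/4) = \emptyset\}$.

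Finally, since $V$ is a unit-intensity Poisson process, $\P(V\cap B(w,\ell/4)=\emptyset) = e^{-\pi(\ell/4)^2} = e^{-\pi\ell^2/16} \le e^{-\ell^2/32}$ (using $\pi/16 > 1/32$, in fact with plenty of room — alternatively keep $e^{-\pi\ell^2/16}$; the stated bound follows a fortiori, so I would simply use the weaker exponent to match the lemma). A union bound over the grid points then yields the claimed estimate $\left(\tfrac{\sqrt{32}\rho}{\ell}+8\right)^2 e^{-\ell^2/32}$. The only step requiring any care is the first one — making precise the reduction from ``a long edge meets $Q(0,\rho)$'' to ``a fixed-radius ball near $Q(0,\rho)$ is empty'' — and in particular checking that the witnessing empty disk can be taken with center within distance $\ell$ of $Q(0,\rho)$; the rest is a routine net-plus-union-bound computation with constants chosen generously so that the arithmetic closes.
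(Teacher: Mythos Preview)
Your overall approach---reduce ``a long Delaunay edge meets $Q(0,\rho)$'' to ``an empty disk of fixed radius has its center in $Q(0,\rho+\ell)$'', then cover and take a union bound---is exactly the paper's. The only differences are cosmetic: the paper extracts an empty disk of radius $\ell/4$ (via the empty semicircle on $[x,y]$) and tiles $Q(0,\rho+\ell)$ by squares of side $\ell/\sqrt{32}$, while you aim for radius $\ell/2$ and net by points.

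There is, however, a genuine error in your justification of the geometric reduction. Neither intermediate claim holds: an edge of length $\ge\ell$ meeting $Q(0,\rho)$ may have \emph{both} endpoints arbitrarily far from $Q(0,\rho)$ (take an edge of length $10\ell$ whose midpoint lies in the square), and the center of the witnessing circumscribing disk need not be near either endpoint---it can lie anywhere on the perpendicular bisector of $[x,y]$. So ``one of its endpoints, and hence the center of the empty disk, is within distance $\ell$ of $Q(0,\rho)$'' is false as written.

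The conclusion you actually need is still true, and easy to recover. If $p\in[x,y]\cap Q(0,\rho)$ and $D=B(c,r)$ is the empty witnessing disk (so $r\ge\ell/2$), then $p$ lies on a chord of $D$, hence $|p-c|\le r$. The nearest point $z$ of $\overline{B}(c,r-\ell/2)$ to $p$ then satisfies $|z-p|\le \ell/2$ and $B(z,\ell/2)\subset D$, giving the empty ball of radius $\ell/2$ centered in $Q(0,\rho+\ell)$ that you want. The paper's alternative is to observe that $D$ always contains one of the two semicircles with diameter $[x,y]$, and to place an empty $B(z,\ell/4)$ inside that semicircle near $p$; this avoids any reference to the location of $c$. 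Either fix closes the gap, and the rest of your net-plus-union-bound computation goes through.
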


\begin{proof}
  Suppose $(x,y)$ were such an edge, then some disc with $x,y$ on its
  boundary has no points in its interior. Consequently at least one of the
  two semi-circles with diameter $(x,y)$ has no points in its interior.
  This implies that there is an empty disc $B_{\ell/4}(z)$ for some $z\in
  Q(0,\rho+\ell)$ ($z$ might be outside $Q(0,\rho)$ since one of $x,y$ may
  be outside the square).

  Cover $Q(0,\rho+\ell)$ by $\left\lceil \frac{\rho+\ell}{\ell/\sqrt{32}}
  \right\rceil^2$ squares of side length $\ell/\sqrt{32}$. It follows that if
  such a long edge exists than one of the squares (the one containing $z$)
  must be empty, and the claim follows.
\end{proof}

We continue by showing that after some low-degree vertices are deleted,
many large holes remain in the graph.

\begin{defn}
  Call a square $Q(x,\rho)$ a \emph{typical square} if there exists some
  vertex $v \in Q(x,\rho)$ such that:
  \begin{enumerate}
  \item $\deg[v] < 6$.
  \item $v$ is not in the interior of any triangle in the Delaunay Graph $G$.
  \end{enumerate}
  Otherwise we call the square
  \emph{rare}.
\end{defn}

To make this clear, the second condition states that there are no
$v_1,v_2,v_3 \in V$ which are pairwise adjacent in $G$ such that $v$ is
contained in the interior of the triangle $(v_1,v_2,v_3)$.

\begin{lem}[Rare squares are rare] \label{L:rare_squares}
  For some $\alpha,\beta>0$ we have $\P(Q(x,\rho) \text{ is rare}) \le
  \alpha\exp(-\beta \rho)$.
\end{lem}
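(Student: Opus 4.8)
The plan is to show that $Q(x,\rho)$ contains, with probability at least $1-\alpha e^{-\beta\rho}$, a \emph{good} vertex: a vertex $v\in V$ with $\deg[v]\le5$ lying in the interior of no triangle of $G$ (the existence of such a $v$ makes $Q(x,\rho)$ typical by definition). The proof combines routine ingredients with one genuinely new input: \textbf{(i)} good vertices have positive intensity; \textbf{(ii)} by the no-long-edge bound \lemref{no_long_edges} together with sealed squares (\lemref{sealed_high_prob}--\lemref{sealed_independent}) the event ``$S$ contains a good vertex'' can be localized to a bounded dilation of $S$; \textbf{(iii)} a union/independence bound over $\asymp\rho$ well-separated sub-squares then yields the estimate. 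The hard part will be \textbf{(i)}.

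Granting \textbf{(i)}, fix $\ell=K\sqrt\rho$ with $K$ a large constant. Let $\Lambda$ be the event that $G$ has a Delaunay edge of length $\ge t$ meeting $Q(x,\rho+2t)$ for some dyadic $t\ge\ell/2$, or that some relevant dilation of a side-$\sqrt\rho$ sub-square of $Q(x,\rho)$ fails to be $\ell$-sealed. Summing the bounds of \lemref{no_long_edges} and \lemref{sealed_high_prob} over the relevant scales gives $\P(\Lambda)\le Ce^{-cK^2\rho}$, which is at most $\tfrac12\alpha e^{-\beta\rho}$ once $K$ is large. On $\Lambda^c$ every triangle of $G$ meeting $Q(x,\rho)$ has diameter $<\ell$ and, by \lemref{sealed_independent}, the Voronoi structure near $Q(x,\rho)$ is determined locally; hence for a side-$\sqrt\rho$ sub-square $S\subseteq Q(x,\rho)$ the indicator of ``$S$ contains a good vertex'' agrees on $\Lambda^c$ with a function $g_S$ of $V\cap S^{c_1\ell}$ for a constant $c_1$. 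Choosing a subfamily $\mathcal S$ of such sub-squares that are pairwise at $\ell^\infty$-distance $\ge2c_1\ell$, one gets $|\mathcal S|\ge c\rho$ and the $S^{c_1\ell}$, $S\in\mathcal S$, pairwise disjoint, so $\{g_S\}_{S\in\mathcal S}$ are independent. By \textbf{(i)} and the mixing of the Poisson process, $\P(S\text{ contains a good vertex})\to1$ as $\rho\to\infty$, whence $\P(g_S=1)\ge\tfrac12$ for all large $\rho$. If $Q(x,\rho)$ is rare then no $S$ contains a good vertex, so on $\Lambda^c$ every $g_S$ vanishes; therefore
\[
  \P\big(Q(x,\rho)\ \text{rare}\big)\ \le\ \P(\Lambda)+\P\!\Big(\bigcap_{S\in\mathcal S}\{g_S=0\}\Big)\ \le\ \tfrac12\alpha e^{-\beta\rho}+2^{-c\rho}\ \le\ \alpha e^{-\beta\rho},
\]
after adjusting constants (and inflating $\alpha$ to absorb small $\rho$).

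It remains to establish \textbf{(i)}. A vertex lies in the interior of no triangle exactly when it is a vertex of the triangulation $G'$ of the plane formed by the \emph{maximal} triangles of $G$ (every triangle lies in a maximal one, these tile the plane, and a vertex interior to a triangle is interior to the maximal one containing it). The vertex set $V[G']$ is a.s.\ nonempty, so, being a translation-equivariant factor of the ergodic Poisson process, it has positive intensity. Thus it suffices to produce, with positive probability, a vertex $v\in V[G']$ with $\deg_G[v]\le5$. Here one uses that for $v\in V[G']$ one has $\deg_G[v]=\deg_{G'}[v]$ whenever no two non-consecutive $G$-neighbours of $v$ are $G$-adjacent (equivalently, every Delaunay face at $v$ is already a maximal triangle): so it is enough to exhibit, with positive probability, such a $v$ of $G$-degree at most $5$.

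This last step is the main obstacle. One builds a positive-probability event by combining a \emph{bounded} local configuration around a Palm-conditioned point $v$ --- a tight cluster of at most five points closely encircling $v$, surrounded by a densely packed shell --- which forces the Voronoi cell of $v$ to be tiny (so $\deg_G[v]\le5$, with all neighbours of $v$ in the cluster), forces no two non-consecutive neighbours of $v$ to be adjacent, and forces $v$ to lie in no \emph{small} triangle; with the event that $v$ lies in \emph{no} triangle at all. The difficulty is that ``$v$ lies in no triangle'' is genuinely non-local: a large triangle far from $v$ can contain it, and this cannot be excluded by any bounded configuration. What must be checked is that this event has positive probability \emph{conditionally} on the local configuration; since it depends only on the Poisson process far from $v$ and essentially says ``$v\in V[G']$'', whose positivity is already known, the required compatibility is the crux --- morally it reflects that a Poisson configuration cannot locally coincide with a perfect triangular lattice. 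Once \textbf{(i)} is in hand, the displayed bound completes the proof.
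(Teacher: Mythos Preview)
Your overall architecture---tiling $Q(x,\rho)$ by $\asymp\rho$ sub-squares of side $\asymp\sqrt\rho$, localizing via seals, and using independence to get exponential decay---matches the paper's proof. The difference is entirely in how the non-local condition ``not in the interior of any triangle of $G$'' is handled, i.e.\ in your step~(i).

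The paper sidesteps (i) completely. Rather than proving that globally good vertices have positive intensity, it \emph{weakens} the defining condition of a good sub-square to a purely local one: the sub-square $Q(y,4\gamma)$ (with $\gamma=\sqrt\rho$) is called good if $Q(y,3\gamma)$ is $\gamma$-sealed and $Q(y,\gamma)$ contains a vertex of degree $\le 5$ lying in no triangle \emph{with all three vertices in $Q(y,2\gamma)$}. This is now a local event, measurable with respect to the process in $Q(y,4\gamma)$, and its positive probability is immediate (construct an explicit configuration). Independence over disjoint sub-squares gives $\P(\text{no good sub-square})\le e^{-\beta\rho}$. The gap between ``no local triangle'' and ``no triangle'' is then closed by a separate tail estimate: if the distinguished vertex of some good sub-square \emph{is} in a triangle of $G$, that triangle has an edge of length $\ge\gamma$, and \lemref{no_long_edges} (summed over dyadic scales to catch triangles containing all of $Q(x,\rho)$) bounds this by $Ce^{-c\rho}$. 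So the paper's decomposition is $\P(\text{rare})\le\P(\text{no good sub-square})+\P(\text{some long edge})$, and (i) never enters.

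Your route through (i) is not wrong in spirit, but your argument for it is incomplete: the final paragraph correctly identifies the obstacle (the event ``$v$ lies in no triangle'' is not determined by any bounded neighbourhood) and then does not resolve it---the appeal to ``depends only on the Poisson process far from $v$'' is not accurate, since a large triangle containing $v$ can have vertices arbitrarily close to $v$. The paper's localize-then-correct trick is exactly what fills this gap, and once you see it, (i) becomes unnecessary.
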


\begin{proof}
  We may assume without loss of generality that $\rho\ge C$ for some large
  $C>0$ (otherwise the claim is trivial). Let $\gamma=\sqrt{\rho}$. The
  square $Q(0,\rho)$ contains at least $c\rho$ disjoint squares
  $Q(x,4\gamma)$ for some $c>0$. Call each of these squares {\em good} if
  it satisfies the following:
  \begin{enumerate}
  \item $Q(x,3\gamma)$ is $\gamma$-sealed,
  \item $Q(x,\gamma)$ contains a vertex $v$ of degree at most 5 which is
    not in the interior of any triangle with vertices in $Q(x,2\gamma)$.
  \end{enumerate}
  Note that by \lemref{sealed_independent}, the event that $Q(x,4\gamma)$
  is good is determined by the Poisson process within it, and so these
  events are all independent. Each square has some probability $p>0$ of
  being good (independent of $\rho$ since long edges are unlikely by Lemma~\ref{L:no_long_edges}),
  so the probability that no
  square within $Q(0,\rho)$ is good is at most $e^{-\beta\rho}$ for some
  $\beta>0$.

  If the low-degree vertex in a good square is contained in a triangle of
  $G$ then an edge of that triangle must have length at least $\gamma$.
  Either the triangle intersects $Q(0,\rho)$, which by
  Lemma~\ref{L:no_long_edges} has probability at most $C_1\rho e^{-\rho/32}
  \le C_2e^{-\rho/33}$ for some $C_1,C_2>0$. Or the triangle contains
  $Q(0,\rho)$ in its interior, in which case for some integer $m\ge 1$, its
  longest edge has length at least $m\rho$ and intersects $Q(0,m\rho)$. By
  a union bound, this has probability at most $\sum_{m=1}^\infty
  C_2e^{-m^2\rho^2/33}\le C_3e^{-\rho^2/33}$ for some $C_3>0$.
\end{proof}

In what follows, define
\begin{align*}
  L &:= \log R, & r &:= R^{1/3}.
\end{align*}
$L$ will be a bound on length of edges that appear (and can be reduced to
$C\sqrt{\log R}$ for large enough $C$). The role of $r$ is more involved,
and there is much freedom in the choice of $r$. Primarily, we consider a
partition of boxes of size of order $R$ into boxes of size $r$. For
simplicity, we assume that $6R/r$ is an odd integer ($R$ can be arbitrarily
large under this condition).

Define for each $x \in r\Z^2$ the square $Q_x:=Q(x,\frac{r}{2})$. Note that
$Q(0,3R)$ is precisely tiled by the boxes $\{Q_x, x\in r\Z^2\cap Q(0,3R)\}$. We
now define several events which we will show to be unlikely.
\begin{align*}
  \Omega_0 &:=\{\text{$G^R_\infty$ has a vertex in $Q(0,R)$}\}, \\
  \Omega_1 &:=\{\text{There exists $e\in E[G]$ of length at least $L$ that
    intersects $Q(0,3R)$}\}, \\
  \Omega_2 &:=\{\text{$Q_x$ is rare for some $x\in r\Z^2\cap Q(0,3R)$}\}, \\
  \Omega_3 &:=\{\text{There exists $x\in r\Z^2 \cap Q(0,3R)$ and $|V\cap
    Q_x| \ge 2r^2$}\}, \\
  \Omega_4 &:=\{|V\cap A| > 2\mbox{Area}(A)\}, \text{ where } A =
  A(0,3R,3R+L).
\end{align*}

Thus \lemref{finite_radius_6core} states that $\P(\Omega_0)$ is small.

\begin{lemma}
  With $L,r$ as above, $\P(\Omega_i) \xrightarrow[R\to\infty]{} 0$ for
  $i=1,2,3,4$.
\end{lemma}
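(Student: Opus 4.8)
The plan is to bound each $\P(\Omega_i)$ separately, since the four events are of rather different natures; in each case the idea is a union bound over the $O((R/r)^2)$ or $O(R/\log R)$ relevant sub-boxes together with an exponential tail estimate, so that the prefactor (polynomial in $R$) is killed by the exponential.

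For $\Omega_1$ I would apply \lemref{no_long_edges} directly with $\rho=3R$ and $\ell=L=\log R$: the probability is at most $\left(\sqrt{32}\cdot 3R/\log R + 8\right)^2 e^{-(\log R)^2/32}$, and since $(\log R)^2$ grows faster than $\log R$, this is $e^{-(\log R)^2/32 + O(\log\log R)}\to 0$. For $\Omega_2$, there are $(6R/r)^2 = 36 R^{4/3}$ boxes $Q_x$ of half-side $r/2$; by \lemref{rare_squares} each is rare with probability at most $\alpha e^{-\beta r/2} = \alpha e^{-\beta R^{1/3}/2}$, and a union bound gives $\P(\Omega_2)\le 36 R^{4/3}\alpha e^{-\beta R^{1/3}/2}\to 0$ since $R^{1/3}$ dominates $\log R$. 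For $\Omega_3$, each $Q_x$ has area $r^2$, so $|V\cap Q_x|\sim\Poi(r^2)$; the event $|V\cap Q_x|\ge 2r^2$ is a large-deviation event for a Poisson variable and has probability at most $e^{-c r^2}$ for some $c>0$ (e.g.\ $c = 1 - \log 2 > 0$ via the standard Chernoff bound $\P(\Poi(\lambda)\ge 2\lambda)\le e^{-\lambda(2\log 2 - 1)}$). Again a union bound over $36R^{4/3}$ boxes leaves $36 R^{4/3} e^{-c R^{2/3}}\to 0$. For $\Omega_4$, the annulus $A=A(0,3R,3R+L)$ has area $O(RL) = O(R\log R)$, and $|V\cap A|\sim\Poi(\text{Area}(A))$; the same Chernoff bound gives $\P(\Omega_4)\le e^{-c\,\text{Area}(A)} = e^{-c\,\Theta(R\log R)}\to 0$, with no union bound needed.

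The only mild subtlety — and the step I would treat most carefully — is making sure the constants in the choices $L=\log R$, $r=R^{1/3}$ are compatible with the exponential rates: one wants each exponent to be a growing function of $R$ after accounting for the polynomial prefactor from the union bound, and indeed $R^{1/3}\gg\log R$ and $(\log R)^2\gg\log R$, so all four go to zero. (The $\Omega_1$ bound is the slowest, decaying only quasi-polynomially, which is why $L$ cannot be taken much smaller than $\sqrt{\log R}$; the remark in the paper that $L$ can be reduced to $C\sqrt{\log R}$ is consistent with this.) No genuine obstacle arises here: this lemma is the ``easy'' collection of estimates, and each $\Omega_i$ reduces to either \lemref{no_long_edges}, \lemref{rare_squares}, or a Poisson tail bound combined with a polynomial union bound. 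The real work of Proposition~\ref{removal_prop} lies in the subsequent deduction, on the complement of $\bigcup_i\Omega_i$, that $\Omega_0$ is also unlikely — via \lemref{large_6core} and the expansion argument sketched after Corollary~\ref{C:finite_radius_6core} — but that is beyond the present statement.
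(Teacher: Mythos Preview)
Your proposal is correct and follows essentially the same approach as the paper: apply \lemref{no_long_edges} for $\Omega_1$, \lemref{rare_squares} with a union bound for $\Omega_2$, and a Poisson Chernoff tail bound (with a union bound for $\Omega_3$, without one for $\Omega_4$). The only cosmetic discrepancy is the Chernoff constant you quote ($c=1-\log 2$ versus the exponent $2\log 2-1$ in your displayed bound), but either value is positive so this does not affect the argument.
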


\begin{proof}
  \lemref{no_long_edges} implies that $\P(\Omega_1) = O(R^2 e^{-L^2/32})$
  is small. \lemref{rare_squares} implies $\P(\Omega_2) = O(R^{4/3}
  e^{-\beta r})$ (since there are $(6R/r)^2$ squares to consider).

  $\P(\Omega_3)$ and $\P(\Omega_4)$ are bounded by the fact that
  $\P\big(\Poi(\lambda)>2\lambda\big) \leq e^{-c\lambda}$ for some constant
  $c$. This gives respective bounds $O(R^2 e^{-cr^2})$ and $O(e^{-cRL})$.
\end{proof}

Define the set
\[
S := \left\{x \in r\Z^2 \cap Q\left(0,3R\right) ~:~ Q_x \text{
    is typical and } G^R_{\infty} \cap Q_x \ne \emptyset \right\}.
\]

\begin{lemma}\label{many_boundary_verts_lem}
  There exists $C>0$ such that if $\Omega_1^c$ holds then
  \[
  \big| S \big| \leq C \Big|V\cap A(0,3R,3R+L) \Big|.
  \]
\end{lemma}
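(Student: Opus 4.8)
The plan is to turn the statement into a deterministic fact about the planar graph $G^R_\infty$ on the event $\Omega_1^c$ and prove it by a discharging argument resting on \lemref{LD_large}. On $\Omega_1^c$ every edge of $G$ with an endpoint in $Q(0,3R)$ has length $<L$, so every vertex of $G^R_\infty$ lying in $Q(0,3R)$ has all of its $G^R_\infty$-neighbours inside $Q(0,3R+L)$. Let $W$ consist of $V(G^R_\infty)\cap Q(0,3R)$ together with all vertices of $G^R_\infty$ in $A(0,3R,3R+L)$ that are $G$-adjacent to a vertex of $V(G^R_\infty)\cap Q(0,3R)$, and let $\widehat H$ be the subgraph of $G^R_\infty$ induced on $W$. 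Then $\widehat H$ is finite, simple and planar; by the choice of $W$ every vertex of $\widehat H$ in $Q(0,3R)$ keeps all its $G^R_\infty$-neighbours, hence has degree at least $6$ in $\widehat H$; and every remaining vertex of $\widehat H$ lies in $A(0,3R,3R+L)$, hence in $V\cap A(0,3R,3R+L)$. Note also that any $x\in S$ has a vertex $w_x\in Q_x\cap V(G^R_\infty)\subseteq Q(0,3R)\cap W$, so $Q_x$ meets $\widehat H$.

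Decompose $\widehat H$ into connected components. Every vertex of $\widehat H$ is, or is adjacent to, a surviving vertex in $Q(0,3R)$, so each component $\widehat H_j$ contains a vertex of degree at least $6$ (hence at least $7$ vertices) lying in $Q(0,3R)$. In $\widehat H_j$ all vertices of degree at most $5$ lie in the annulus, so \lemref{LD_large} gives $ME(\widehat H_j)\le \tfrac52\,|V(\widehat H_j)\cap A(0,3R,3R+L)|$; in particular $\widehat H_j$ is not contained in $Q(0,3R)$, for otherwise it would be a finite connected simple planar map of minimal degree $6$, contradicting \lemref{LD_large}. Since the annulus–vertex sets of distinct components are disjoint subsets of $V\cap A(0,3R,3R+L)$, summing yields $\sum_j ME(\widehat H_j)\le \tfrac52\,|V\cap A(0,3R,3R+L)|$. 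Charge each $x\in S$ to the component of $\widehat H$ containing $w_x$ (ties broken arbitrarily).

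Now fix $x\in S$ charged to $\widehat H_j$ and pick a witness $v_x\in Q_x$ with $\deg_G v_x\le 5$ that is in the interior of no $G$-triangle (it exists since $Q_x$ is typical). As $\deg_G v_x\le5$, the vertex $v_x$ is deleted at the first step, so $v_x\notin V(\widehat H_j)$ and hence $v_x$ lies in some face $F_x$ of $\widehat H_j$. If $F_x$ were a bounded triangle, its three vertices would be pairwise $G$-adjacent and enclose $v_x$, which is impossible; the same rules $F_x$ out if the external face is a triangle. So $F_x$ is either a bounded face of degree at least $4$ or the external face. Since $w_x\notin F_x$ and $|v_x-w_x|\le\sqrt2\,r$, the segment $[v_x,w_x]\subseteq Q(0,3R)$ meets $\partial F_x$, so $Q_x$ lies within distance $2\sqrt2\,r$ of an edge of $\partial F_x$, an edge which, meeting $Q(0,3R)$, has length $<L\le r$. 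Consequently the number of boxes charged to $\widehat H_j$ with a given value of $F_x$ is at most the number of cells of $r\Z^2$ within $2\sqrt2\,r$ of $\partial F_x$, and since each edge of that boundary has length at most $r$ it contributes $O(1)$ such cells, giving a bound of $O(\deg[F_x])$. Summing over bounded faces of degree at least $4$ and the external face of $\widehat H_j$ (using that the number of faces of degree at least $4$ is at most $ME(\widehat H_j)$ and $\deg[\text{ext}]\le ME(\widehat H_j)+3$) shows that $O(ME(\widehat H_j))$ boxes are charged to $\widehat H_j$; summing over $j$ and invoking the bound above gives $|S|=O\big(|V\cap A(0,3R,3R+L)|\big)$, which is the claim for suitable $C$.

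The main obstacle is the last counting step, i.e.\ preventing a single large face of $\widehat H_j$ from absorbing many boxes. The two features that make it work are, first, condition~(ii) in the definition of $S$ (that $Q_x$ meets $G^R_\infty$), which confines the boxes charged to a face $F$ to a $2\sqrt2\,r$–neighbourhood of $\partial F$ rather than to $F$ itself, so their number is $O(\text{perimeter}(F)/r)$ rather than $O(\text{area}(F)/r^2)$; and second, the scale separation $L=\log R\ll r=R^{1/3}$, which turns $O(\text{perimeter}(F)/r)$ into $O(\deg[F])$ because each boundary edge is much shorter than a box. A minor technical point to be cleaned up is that an edge of $\widehat H$ joining two annulus vertices need not intersect $Q(0,3R)$ and so is not literally controlled by $\Omega_1^c$; such edges join vertices lying within $L$ of $Q(0,3R)$, are short off an event of negligible probability (cf.\ \lemref{no_long_edges}), and their contribution to the cell count is absorbed without affecting the conclusion.
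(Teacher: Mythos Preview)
Your approach is essentially the paper's: bound $ME$ of the surviving graph via \lemref{LD_large}, then show each $x\in S$ contributes a non-triangular face through an edge meeting $Q_x$, and count. The extra layers you add---restricting to $\widehat H\subset H$, decomposing into components, charging boxes to components---are not needed. The paper simply lets $H$ be $G^R_\infty$ induced on $Q(0,3R+L)$, observes that on $\Omega_1^c$ all low-degree vertices of $H$ lie in the annulus so $|V\cap A|\ge LD(H)>\tfrac25 ME(H)$, and then notes directly that the face $f_x\ni v_x$ has an edge $e_x$ actually \emph{intersecting} $Q_x$ (since $Q_x$ also contains a vertex of $H$), not merely lying within $2\sqrt2\,r$ of it. Since $e_x$ meets $Q(0,3R)$ it has length $<L<r$ and hence meets at most $3$ boxes, so $\sum_{\deg f>3}\deg f\ge \tfrac13|S|$ and $ME(H)\ge\tfrac1{12}|S|$.

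This direct ``$e_x$ intersects $Q_x$'' observation is exactly what dissolves your flagged technical point: you never need every edge of $\partial F_x$ to be short, only the specific edge $e_x$ through $Q_x$, and that one is covered by $\Omega_1^c$. Your sentence ``since each edge of that boundary has length at most $r$'' is thus both unjustified and unnecessary. A second minor slip: the external face being a triangle does \emph{not} put $v_x$ inside a $G$-triangle (it would put $v_x$ outside it), so your argument there is wrong---though the contribution is still $O(1)$ per component and hence harmless. Once you replace your neighbourhood counting by the cleaner edge-intersection count, the component decomposition and the special handling of the external face become superfluous and the proof collapses to the paper's four-line version.
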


\begin{proof}
  Let $H$ be the sub-graph of $G^R_\infty$ induced by vertices in
  $Q(0,3R+L)$. On the event $\Omega_1^c$, the vertices of $H \cap Q(0,3R)$
  all have degree at least 6. Thus low-degree vertices are all in the
  annulus $A(0,3R,3R+L)$ and by Lemma~\ref{L:LD_large},
  \[
  \Big|V\cap A(0,3R,3R+L) \Big| \ge LD(H) > \frac25 ME(H).
  \]

  For each $x\in S$ the square $Q_x$ is typical. Hence there is a vertex
  $v_x\in Q_x$ of degree at most 5 that is not contained in any triangle in
  $G$. The vertex $v_x$ is deleted in the first round and so is not in $H$.
  Let $f_x$ be the face of $H$ surrounding $v_x$. Note that $f_x$ must have
  an edge $e_x$ that intersects $Q_x$, since otherwise $Q_x$ is completely
  in the interior of $f_x$ and there could be no vertex of $H$ in $Q_x$.

  Now, on $\Omega_1^c$, the edge $e_x$ has length at most $L<r$ and
  therefore can intersect at most 3 different squares $Q_x$ (it can
  intersect 3 if it passes near a corner of $Q_x$). Since the face $f_x$
  cannot be a triangle by definition of $v_x$ we deduce that
  \begin{equation*}
    \sum_{\substack{f \text{ face of }H \\ \deg[f]>3}} \deg[f] \ge
    \frac{1}{3}|S|.
  \end{equation*}
  Hence $ME(H) = \sum_{f\in H} (\deg[f]-3) \ge \frac{1}{12}|S|$ proving the
  claim (with $C=30$).
\end{proof}

\begin{proof}[Proof of \lemref{finite_radius_6core}]
  We show that $\Omega_0 \subset \bigcup_{i=1}^4 \Omega_i$. Assume by
  negation that $\Omega_0$ and $\Omega_i^c$ hold for $i=1,2,3,4$. Let $H$
  be the restriction of $G^R_\infty$ to $Q(0,3R+L)$, and apply
  \lemref{large_6core} with $\rho=R$, $\ell=L$. $\Omega_1^c$ and $\Omega_0$
  show that the lemma's hypotheses hold, thus $H$ has at least
  $\frac{8R^2}{5L^2}$ vertices in $Q(0,3R)$.

  On the other hand we show that $H$ is small. Tile $Q(0,3R)$ by boxes
  $Q_x$ with $x\in r\Z^2 \cap Q(0,3R)$. On $\Omega_4^c$,
  Lemma~\ref{many_boundary_verts_lem} implies that $|S| \leq C R L$ for
  some $C$. On $\Omega_3^c$ each of these includes at most $2r^2$ vertices,
  so the number of vertices of $H$ in $Q(0,3R)$ that are in typical boxes
  is at most $2r^2|S| \leq C R L r^2$. On $\Omega_2^c$ there are no
  vertices in rare boxes.

  Thus $\frac{8R^2}{5L^2} \leq C R L r^2$ which is a contradiction
  for $R$ large enough and our choice of $r$ and $L$.
\end{proof}

\begin{proof}[Proof of Proposition~\ref{removal_prop}]
  Define $G^{R,x}_M$ similarly to $G^R_M$, except that low degree vertices
  are deleted in $Q(x,3R)$ instead of $Q(0,3R)$. Consider the following
  dependent percolation process on the lattice $\Lambda=R\Z^2$. A point $x$
  is open in one of 3 cases:
  \begin{enumerate}
  \item The square $Q(x,4R)$ is not $R$-sealed,
  \item $G^{R,x}_M$ has a vertex in $Q(x,R)$,
  \item $G$ has an edge of length at least $R/2$ intersecting $Q(x,R/2)$.
  \end{enumerate}

  We first argue that the event $\{x \text{ is open}\}$ is determined by
  the Poisson process in $Q(x,5R)$, so that the process is 11-dependent.
  Indeed, whether $Q(x,4R)$ is $R$-sealed depends only on the process in
  $Q(x,5R)$. If it is $R$-sealed, the restriction of the Voronoi map to
  $Q(x,3R)$ is determined by the process in $Q(x,5R)$, which determine the
  state of $x$.

  By Lemmas~\ref{L:sealed_high_prob}, \ref{L:finite_radius_6core} and
  \ref{L:no_long_edges}, we can choose $M,R$ so that $\P(x \text{ is
    open})$ is arbitrarily small. In particular, for some $M,R$, using
  Lemma~\ref{lem:k_dependent_percolation}, this percolation is dominated by
  sub-critical percolation, and has no infinite open component.

  Finally, we argue that if there were an infinite component in $G_M$ then
  there would also be an infinite component in our process on $\Lambda$.
  Consider all squares $Q(x,R/2)$ which intersect the edges of some
  infinite open component in $G_M$. For each such $x$, either there is a
  vertex of $G_M$ in $Q(x,R)$, or else the edge that passes through
  $Q(x,R/2)$ has both endpoints outside $Q(x,R)$. Since $G_M\subset
  G^{R,x}_M$, either case implies $x$ is open.
\end{proof}

\subsection{Equivariant Coloring}\label{coloring_sec}

We now use Propositions~\ref{removal_prop} and \ref{area_prop} to construct
a deterministic $6$-coloring scheme. Recall $G_n$ is derived from $G_{n-1}$
by deleting low degree vertices. Define the level of a vertex by
\[
\ell(v) = \max\{n : v\in G_n \}.
\]
Thus a vertex has level 0 iff its degree is at most 5. For neighboring
$v,w$ we direct the edge from $v$ to $w$, and write $v \to w$, if either
$\ell(w)>\ell(v)$ or ($\ell(v)=\ell(w)$ and $A(w) < A(v)$)
(Proposition~\ref{area_prop} gives that no two areas are equal).

Let $\prec$ to be the transitive closure of $\to$. That is, $w\prec v$ iff
there is a finite sequence such that $v=u_0\to u_1 \to \ldots \to u_n=w$.

\begin{lemma}\label{L:finite_predecessors}
  A.s.\ every $v \in V$ has finitely many $\prec$-predecessors (in particular,
  $\prec$ is well founded).
\end{lemma}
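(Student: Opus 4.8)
The plan is to argue by contradiction, working on the full-measure event on which Propositions~\ref{removal_prop} and~\ref{area_prop} hold and the Delaunay graph $G$ is locally finite; the conclusion is then a deterministic statement about this configuration, so it holds simultaneously for all (countably many) $v\in V$. Suppose some $v\in V$ has infinitely many $\prec$-predecessors. The first step is to record the elementary monotonicity of the construction: along any directed path $v=u_0\to u_1\to\cdots\to u_n$ the levels $\ell(u_i)$ are non-decreasing, and whenever $\ell(u_i)=\ell(u_{i+1})$ we have $A(u_{i+1})<A(u_i)$ and, since the $G_k$ are induced subgraphs, $u_i$ and $u_{i+1}$ are adjacent in $G_{\ell(u_i)}$. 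In particular every predecessor of $v$ has level at least $\ell(v)$, and a directed cycle would have to consist of vertices of a common level with strictly decreasing areas, which is impossible; hence $\prec$ is a strict partial order.

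The second step is to extract an infinite directed ray out of $v$. For $w\prec v$ let $\operatorname{dist}(w)$ be the least length of a directed path from $v$ to $w$. Since each vertex has finitely many out-neighbours, an easy induction shows that for every $n$ the set of vertices reachable from $v$ in at most $n$ steps is finite; so if $v$ has infinitely many predecessors then $\operatorname{dist}$ is unbounded. Form the tree rooted at $v$ whose vertex set is $\{v\}$ together with the predecessors of $v$, placing each $w$ at depth $\operatorname{dist}(w)$ and joining it to the penultimate vertex of a fixed shortest directed path from $v$ to $w$. This tree is locally finite and of infinite depth, so by König's lemma it has an infinite branch $v=u_0\to u_1\to u_2\to\cdots$ with the $u_i$ pairwise distinct.

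The final step is a dichotomy on the levels along this ray, which by the monotonicity above form a non-decreasing sequence. If the levels are bounded, they are eventually equal to some constant $k$, say for $i\ge N$; then $(u_i)_{i\ge N}$ is a path in $G$ with strictly decreasing areas $A(u_N)>A(u_{N+1})>\cdots$, contradicting Proposition~\ref{area_prop}. If instead the levels are unbounded, then $\ell(u_i)\ge M$ for all large $i$, so these $u_i$ lie in $G_M$; since $G_M$ is an induced subgraph and consecutive $u_i$ are $G$-adjacent, the corresponding tail of the ray is an infinite path inside $G_M$, forcing $G_M$ to have an infinite connected component and contradicting Proposition~\ref{removal_prop}. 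Either way we reach a contradiction, so every $v$ has finitely many $\prec$-predecessors; well-foundedness of $\prec$ then follows, since an infinite $\prec$-descending chain below $v$ would furnish infinitely many predecessors of $v$ (distinct, as $\prec$ is irreflexive). The only real subtlety here is the König-type extraction combined with the observation that \emph{both} propositions are genuinely needed: Proposition~\ref{area_prop} kills rays along which the level stabilises, and Proposition~\ref{removal_prop} kills rays along which the level grows without bound.
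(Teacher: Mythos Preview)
Your argument is correct and follows essentially the same approach as the paper's own proof: extract an infinite directed ray via K\"onig's lemma, then use the dichotomy on whether the levels stabilise (ruled out by Proposition~\ref{area_prop}) or diverge (ruled out by Proposition~\ref{removal_prop}). Your write-up is considerably more detailed---explicitly building the tree for K\"onig's lemma, verifying irreflexivity of $\prec$, and checking that the tail of the ray really is a path in $G_M$---but the underlying structure is identical.
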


\begin{proof}
  We first argue that there is no infinite directed path in $G$. By
  Proposition~\ref{area_prop} there are no infinite $A$-monotone paths, so
  any infinite directed path must have $\ell(v)\to\infty$. However, by
  Proposition~\ref{removal_prop} there are no infinite paths with $\ell>M$.

  Our conclusion then follows from K\"{o}nig's lemma: A locally finite tree
  with no infinite paths is finite.
\end{proof}

From this we get:

\begin{prop}
  There exists a unique function $f:V \to \{0,\dots,5\}$ determined by the
  recursive formula:
  \[
  f(u) = \mex \{ f(v) : u\to v \}
  \]
  where $\mex S = \min(\N \setminus S)$ is the minimal excluded integer
  function.
\end{prop}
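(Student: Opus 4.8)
The plan is to construct $f$ by transfinite/well-founded recursion along the order $\prec$, using \lemref{finite_predecessors} to guarantee that the recursion terminates and is well defined. Specifically, I would first observe that the directed graph $(V,\to)$ has the property that every vertex has out-degree at most $5$: if $\ell(v)=0$ then $\deg(v)\le 5$, so $v$ has at most $5$ neighbors and hence at most $5$ out-edges; if $\ell(v)=n>0$ then $v\in G_n$, so all but at most finitely many of the edges $v\to w$ go to vertices $w$ with $\ell(w)\ge n$ as well, but among neighbors $w$ with $\ell(w)=\ell(v)$ we only have $v\to w$ when $A(w)<A(v)$, while neighbors with $\ell(w)>\ell(v)$ always satisfy $v\to w$ — the point is simply that whatever the out-degree is, the range $\mex\{f(v):u\to v\}$ is computed over a set that, by \lemref{finite_predecessors}, is finite, and it suffices that this value always lies in $\{0,\dots,5\}$, which in turn requires $|\{v:u\to v\}|\le 5$. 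I would therefore make the out-degree bound explicit at the start as the combinatorial input.

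Next I would address existence and uniqueness together. Since $\prec$ is well founded (\lemref{finite_predecessors}), I can do well-founded induction. For uniqueness: if $f$ and $g$ both satisfy the recursion but differ somewhere, pick (by well-foundedness) a $\prec$-minimal vertex $u$ with $f(u)\ne g(u)$; then for every $v$ with $u\to v$ we have $v\prec u$, so $f(v)=g(v)$, hence $\mex\{f(v):u\to v\}=\mex\{g(v):u\to v\}$, contradicting $f(u)\ne g(u)$. For existence: define $f(u)$ by recursion on the $\prec$-rank — explicitly, since $u$ has finitely many $\prec$-predecessors, the set $\{v : u\to v\}$ is finite and each such $v$ has strictly fewer $\prec$-predecessors than $u$, so by induction $f(v)$ is already defined for all of them, and we set $f(u):=\mex\{f(v):u\to v\}$. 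Because this set has at most $5$ elements, its mex lies in $\{0,\dots,5\}$, so $f$ indeed maps into $\{0,\dots,5\}$.

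The one point requiring a little care — and the place I'd expect a referee to push — is making the recursion rigorous when $\prec$ is not a well-order but merely a well-founded partial order with no canonical rank function into the ordinals that is obviously computable locally. The clean way around this is to use the finiteness of the predecessor set directly: let $P(u)$ denote the (finite) set of $\prec$-predecessors of $u$ together with $u$ itself; the restriction of $\to$ to $P(u)$ is a finite directed graph, and since $\prec$ restricted to $P(u)$ is still a well-founded strict partial order on a finite set, it is acyclic, so there is a topological ordering of $P(u)$ along which the formula $f(w)=\mex\{f(v):w\to v\}$ can be evaluated one vertex at a time (every $v$ with $w\to v$ lies in $P(u)$ and precedes $w$ in any such ordering). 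This gives a value $f_u(u)$; a short consistency check — $f_u$ and $f_{u'}$ agree on $P(u)\cap P(u')$ because both satisfy the same recursion on that common finite subset, so by the finite uniqueness argument above they coincide — shows these local solutions glue to a single global $f$. I would present it in this order: out-degree bound, finite-uniqueness on downsets, local construction via topological sort, gluing, and finally the observation that $\mathrm{range}(f)\subseteq\{0,\dots,5\}$ falls out of the out-degree bound.
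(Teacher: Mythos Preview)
Your approach is essentially the paper's: well-founded recursion on $\prec$ (via \lemref{finite_predecessors}), with the out-degree bound $|\{v:u\to v\}|\le 5$ forcing $f(u)\in\{0,\dots,5\}$. Your extra care with local topological sorts and gluing is correct but more elaborate than needed; the paper simply invokes well-founded recursion and cites a set-theory text.

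The one genuine gap is your justification of the out-degree bound. You handle $\ell(u)=0$ correctly, but for $\ell(u)=n>0$ your explanation trails off into ``all but at most finitely many of the edges $v\to w$ go to vertices $w$ with $\ell(w)\ge n$'' (which is vacuous, since $G$ is locally finite and every out-edge already goes to a vertex with $\ell(w)\ge\ell(u)$ by definition of $\to$), and you never reach the actual reason. The clean argument, which the paper gives in one line, is uniform in $n$: the out-neighbors of $u$ are contained in $\{w\sim u:\ell(w)\ge\ell(u)\}$, which is precisely the set of neighbors of $u$ in $G_{\ell(u)}$; since $u$ is deleted when passing from $G_{\ell(u)}$ to $G_{\ell(u)+1}$, its degree in $G_{\ell(u)}$ is at most $5$. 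You should replace your discursive paragraph with this observation.
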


\begin{proof}
  The proof is by induction on $\prec$, which is a well founded order by
  \lemref{finite_predecessors} (see e.g.\ \cite[Chapter 3]{Kunen}). Any $u
  \in V$ has at most 5 neighbors $v$ with $\ell(v) \ge \ell(u)$, so $|\{ v
  : u\to v\}| \le 5$ and so $f(u) < 6$ is well defined.

  Uniqueness holds since $f(u)$ is determined by $\{f(v) : v\prec u\}$.
\end{proof}

\thmref{main} now follows, since $\cP_i = f^{-1}(i)$ defines a deterministic,
isometry equivariant $6$-coloring. Note that the resulting coloring is
finitary, that is, for every $x \in \R^2$ there exists a finite (but
random) $R>0$ such that the color of the cell containing $x$ is a function
of the Poisson process restricted to $B_R(x)$. Indeed, to determine $f(v)$
for $v \in V$, it is sufficient to know the graph $G$ induced on the
$\prec$-predecessors of $v$. Furthermore, there exist $C,c>0$ such that
$\P(R>s) \leq C e^{-c s}$. This is the case because
Propositions~\ref{removal_prop} and \ref{area_prop} are proved using
domination by sub-critical percolation.

\bibliography{colouring}

\end{document}